\numberwithin{equation}{section}
\DeclareFontFamily{U}{mathc}{}
\newcommand{\Oh}{\mathscr{O}}
\def\mult{\operatorname{mult}}
\def\min{\operatorname{min}}
\def\deg{\operatorname{deg}}
\def\max{\operatorname{max}}
\def\lim{\operatorname{lim}}
\def\pr{{\mathbbm P}}
\def\k{\mathbbm{k}}
\def\C{\mathbbm{C}}
\def\M{{\mathfrak{m}}}
\def\R{\C\{x_1,\ldots,x_n\}}
\def\mult{{\rm mult}}
\providecommand{\deg}{\mathop{\rm deg}\nolimits}
\newtheorem{theorem}{Theorem}[section]
\newtheorem{lemma}[theorem]{Lemma}
\newtheorem{corollary}[theorem]{Corollary}
\newtheorem{definition}[theorem]{Definition}
\newtheorem{proposition}[theorem]{Proposition}
\newtheorem{example}[theorem]{Example}
\newtheorem{remark}[theorem]{Remark}
\newtheorem{question}[theorem]{Question}
\title{On Tjurina Ideals of Hypersurface Singularities}
\author{Jo\~ao H\' elder Olmedo Rodrigues}
\address{\noindent Instituto de Matem\'atica e Estat\'istica\\
Universidade Federal Fluminense\\ 
\noindent R. Prof. Marcos Waldemar de Freitas Reis, Bloco G\\
\noindent Campus do Gragoat\'a - S\~ao Domingos\\
\noindent 24.210-201 - Niter\'oi, RJ, Brazil}
\email{\noindent joaohelder@id.uff.br}
\keywords{\noindent Analytic Classification, Mather-Yau theorem, Hypersurface Singularities, Tjurina Algebras, Tjurina Ideals}
\subjclass{\noindent 2010 Mathematics Subject Classification: 14B05, 14H20, 14D06, 32S05}
\begin{document}

\thanks{The author was partially supported by FAPERJ, ARC 211.361/2019}

\begin{abstract}
The Tjurina ideal of a germ of a holomorphic function $f$ is the ideal of $\mathscr{O}_{\mathbbm{C}^n,0}$ - the ring of those germs at $0\in\C^n$ - generated by $f$ itself and by its partial derivatives. Here it is denoted by $T(f)$. The ideal $T(f)$ gives the structure of closed subscheme of $(\C^n,0)$ to the singular set of the hypersurface defined by $f$, being an object of central interest in Singularity Theory. In this note we introduce \emph{$T$-fullness} and \emph{$T$-dependence}, two easily verifiable properties for arbitrary ideals of germs of holomorphic functions. These two properties allow us to give necessary and sufficient conditions on an ideal $I\subset \Oh_{\C^n,0}$ for the equation $I=T(f)$ to admit a solution $f$. As a result we characterize closed subschemes of $(\C^n,0)$ arising as singularities of germs of hypersurfaces.

\end{abstract}
\maketitle

\section{Introduction}
\label{I}

Motivation for the present work comes from the classification of germs of hypersurface singularities in $(\C^n,0)$. A germ of a complex hypersurface $(X,0)\subset (\C^n,0)$  at the origin $0\in\C^n$ is defined as the zero set of some - non-trivial - principal ideal $I(X)$ of $\Oh_{\C^n,0}$, the ring of germs of holomorphic functions at $0\in\C^n$. Let $\M$ denote the unique maximal ideal of $\Oh_{\C^n,0}$. A generator $f\in \M$ - which is well-defined, modulo multiplication by an invertible element in $\Oh_{\C^n,0}$ - of $I(X)=\langle f\rangle $ is said to be an \emph{equation} for $(X,0)$; we often say in this case that $(X,0)\subset (\C^n,0)$ \emph{is defined by} $f$. 

As it is customary, we want to decide when two objects - germs of hypersurfaces at $0\in\C^n$, in the present case - are \emph{alike}, in some prescribed sense. A quite natural notion is the \emph{biholomorphic} equivalence of germs of hypersurfaces. We say that two germs of hypersurfaces, $(X_1,0)\subset (\C^n,0)$ and $(X_2,0)\subset (\C^n,0)$ with equations $f_1,\,f_2$ are \emph{biholomorphically equivalent} if there exist small open neighbourhoods $U_1$ and $U_2$ of the origin $0\in\C^n$, where $f_1,\,f_2$ converge and a (germ of) biholomorphism $\Phi:U_1\rightarrow U_2$ - taking the origin to itself - such that $\Phi(X_1\cap U_1)=X_2\cap U_2$. This is clearly an equivalence relation among germs of hypersurfaces. We refer to the totality of germs of hypersurfaces all biholomorphically equivalent to one another as a \emph{biholomorphic class}. It is easy to see that $(X_1,0)$ and $(X_2,0)$ as above are biholomorphically equivalent if and only if there exists an automorphism $\varphi$ of $\Oh_{\C^n,0}$ and an invertible element $u\in\Oh_{\C^n,0}$ such that $\varphi(f_1) = uf_2$. It is often said in this case that $f_1$ and $f_2$ are \emph{contact equivalent}. So, a guiding big problem is to look for a complete set of invariants describing completely biholomorphic classes. For instance, one of the most famous invariants of a biholomorphic class is the \emph{multiplicity} of their elements. We recall that if a system $\{x_1,\ldots,x_n\}$ of generators of $\M\subset \Oh_{\C^n,0}$ is chosen and if $(X,0)\subset (\C^n,0)$ is a germ of hypersurface defined by $f$, its multiplicity $\mult(X,0)=\mult(f)$ is the smallest degree $m$ of a non-zero homogeneous polynomial appearing in a series expansion $f(x_1,\ldots,x_n)=f_m(x_1,\ldots,x_n)+f_{m+1}(x_1,\ldots,x_n)+\ldots\,\,$ Clearly the multiplicity of a germ doesn't depend on the choice of $\{x_1,\ldots,x_n\}$. So, two biholomorphically equivalent germs of hypersurfaces in $(\C^n,0)$ have the same multiplicity, but not the other way around.

Given the previous terminology, the starting point for our research is a result of J. Mather and S. Yau's study \cite{MY}, which involves a certain \linebreak $\C$-algebra associated to a germ of hypersurface $(X,0)\subset (\C^n,0)$. Fix a system $\{x_1,\ldots,x_n\}$ of minimal generators of $\M\subset \Oh_{\C^n,0}=\C\{x_1,\ldots,x_n\}$. Suppose $(X,0)$ is defined by $f(x_1,\ldots,x_n)$. We define the \emph{Tjurina Algebra} of $(X,0)$ - or more accurately of $f$ - as the quotient ring $$ A(X) = A(f) = \R/\langle f,\frac{\partial f}{\partial x_1},\ldots, \frac{\partial f}{\partial x_n}\rangle.$$ The ideal $\langle f,\frac{\partial f}{\partial x_1},\ldots, \frac{\partial f}{\partial x_n}\rangle$ appearing as the denominator is called the \emph{Tjurina ideal} - or extended Jacobian ideal - of $f$ and we denote it as $T(f)$. If $g$ is another generator of $I(X)$ then $g=uf$, for some invertible element $u\in\R$. Hence $A(f)=A(g)$, because actually $T(f)=T(g)$ and this shows that $A(X)$ really doesn't depend on the chosen generator for $I(X)$.

In order to explain Mather-Yau's result, we observe that when $(X_1,0)$ and $(X_2,0)$ are biholomorphically equivalent germs of hypersurfaces, then from a relation of type $\varphi(f_1) = uf_2$ as above, it is quite straightforward to check that the Tjurina algebras $A(X_1)$ and $A(X_2)$ are isomorphic as \linebreak $\C$-algebras. The converse holds but it is far more subtle and originally proved for the class of \emph{isolated hypersurface singularities}. The result is generally referred to as the Mather-Yau theorem. There is also a generalization due to Gaffney and Hauser (cf. \cite{GH} for details), which is valid for the class of \emph{hypersurface singularities of isolated type}. This class includes (strictly) isolated hypersurfaces singularities. The aforementioned result is stated here in a simplified manner:

\begin{theorem}[Mather, Yau;\, Gaffney, Hauser]\label{MY}
\emph{Two germs of complex hypersurfaces of isolated type, $(X_1,0)\subset (\C^n,0)$ and $(X_2,0)\subset (\C^n,0)$ are biholomorphically equivalent if and only if $A(X_1)$ and $A(X_2)$ are isomorphic as $\C$-algebras.}
\end{theorem}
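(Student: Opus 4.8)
The implication from analytic equivalence to isomorphism of Tjurina algebras is routine, and I would treat it first. An analytic equivalence amounts to an automorphism $\varphi$ of $\Oh_{\C^n,0}$ and a unit $u$ with $\varphi(f_1)=uf_2$. Differentiating by the chain rule gives, for each $i$,
$$\sum_{j}\varphi\!\left(\frac{\partial f_1}{\partial x_j}\right)\frac{\partial \varphi(x_j)}{\partial x_i}=\frac{\partial (uf_2)}{\partial x_i}\in T(f_2),$$
and since the matrix $(\partial\varphi(x_j)/\partial x_i)$ is the Jacobian of a biholomorphism, hence invertible over $\Oh_{\C^n,0}$, I can solve to conclude $\varphi(\partial f_1/\partial x_j)\in T(f_2)$ for all $j$. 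Together with $\varphi(f_1)=uf_2\in T(f_2)$ this yields $\varphi(T(f_1))\subseteq T(f_2)$, and applying the same to $\varphi^{-1}$ gives equality. Thus $\varphi$ descends to a $\C$-algebra isomorphism $A(f_1)\cong A(f_2)$.

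For the converse I start from an abstract isomorphism $\bar\phi\colon A(f_1)\xrightarrow{\sim}A(f_2)$ and aim to manufacture an actual analytic equivalence. The smooth case ($A(f_i)=0$) is trivial, and using the splitting lemma I would reduce the multiplicity-two case to the situation $f_1,f_2\in\M^3$, so that $T(f_i)\subseteq\M^2$. Then I lift $\bar\phi$: choosing convergent representatives $\Psi(x_i)\in\Oh_{\C^n,0}$ of $\bar\phi(\overline{x_i})\in A(f_2)$ defines a local $\C$-algebra homomorphism $\Psi\colon\Oh_{\C^n,0}\to\Oh_{\C^n,0}$ reducing to $\bar\phi$, so $\Psi(T(f_1))\subseteq T(f_2)$. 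Because $T(f_i)\subseteq\M^2$, the map $\bar\phi$ induces on cotangent spaces is exactly the one $\Psi$ induces on $\M/\M^2$, which is therefore bijective; by the inverse function theorem $\Psi$ is an automorphism, and lifting $\bar\phi^{-1}$ shows $\Psi(T(f_1))=T(f_2)$. Replacing $f_1$ by $\Psi(f_1)$ (which changes the germ only by the automorphism $\Psi$ and, by the chain-rule computation above, satisfies $T(\Psi(f_1))=\Psi(T(f_1))=T(f_2)$) reduces everything to the case $T(f_1)=T(f_2)=:T$, where it remains to prove $(V(f_1),0)\cong(V(f_2),0)$.

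The heart of the argument is then a homotopy. Put $F(x,t)=(1-t)f_1+tf_2$, a germ at the origin of $\C^n\times\C$. Since $f_1,f_2$ and all their partials lie in $T$, so do $F$ and each $\partial F/\partial x_j$, whence $(F,\partial F/\partial x_1,\dots,\partial F/\partial x_n)\subseteq T\,\Oh_{\C^n\times\C,0}$. I would upgrade this to an equality by Nakayama: both ideals reduce modulo $t$ to $T(f_1)=T$, so the finitely generated module $M=T\,\Oh_{\C^n\times\C,0}\big/(F,\partial F/\partial x_1,\dots,\partial F/\partial x_n)$ satisfies $M=tM$, forcing $M=0$. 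Consequently the relative Tjurina ideal of the family equals $T\,\Oh_{\C^n\times\C,0}$, and in particular the infinitesimal direction $\partial F/\partial t=f_2-f_1$ lies in it. This is precisely the Thom--Levine criterion for contact-triviality: it produces a holomorphic vector field $\partial_t+\sum_j a_j(x,t)\partial_{x_j}$ with $a_j(0,t)=0$ and a function $b$ satisfying $\partial_t F+\sum_j a_j\,\partial_{x_j}F=bF$, whose flow trivializes the family near a base point and so renders nearby fibres analytically equivalent.

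The step I expect to cause the real difficulty is propagating this from $t=0$ to $t=1$, which requires the Thom--Levine condition $f_2-f_1\in T(f_t)$ to hold along the whole segment, equivalently the constancy $T(f_t)=T$. I already know $T(f_t)\subseteq T$ for every $t$ and that equality holds at $t=0,1$; coherence of the cokernel attached to the relative Tjurina ideal computed above shows that $\{t:T(f_t)\subsetneq T\}$ is a proper analytic, hence finite, subset of the $t$-line. I would then join $0$ to $1$ by a path in $\C$ avoiding these finitely many bad values, verify the criterion at each point of the path (where $T(f_t)=T\ni f_2-f_1$), and integrate the Thom--Levine field along the path by a compactness argument to obtain $(V(f_1),0)\cong(V(f_2),0)$. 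The two genuinely delicate issues here are the finiteness of the bad locus and the holomorphic convergence of the trivializing flow; the relative Nakayama computation is what makes both tractable.
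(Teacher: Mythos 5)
You should first be aware that the paper does not prove Theorem \ref{MY} at all: it quotes it from [MY] and from Theorem 2.26 of [GLS] (the version with no isolated-singularity hypothesis), remarking only that the forward implication is straightforward. So your attempt can only be measured against the standard proof in those references --- and your overall strategy does match it: the forward direction via the chain rule is correct; for the converse, lifting the abstract isomorphism to an automorphism of $\Oh_{\C^n,0}$ (after the splitting-lemma reduction to $f_i\in\M^3$), reducing to the case $T(f_1)=T(f_2)=:T$, and trivializing the pencil $F=(1-t)f_1+tf_2$ over the set of parameters where $T(f_t)=T$ is exactly the classical route. Your relative Nakayama computation, showing that $(F,\partial F/\partial x_1,\ldots,\partial F/\partial x_n)=T\cdot\Oh_{\C^n\times\C,0}$ near a good parameter, is also correct.

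The genuine gap is in the Thom--Levine step. What your Nakayama argument yields is $f_2-f_1\in (F)+(\partial F/\partial x_1,\ldots,\partial F/\partial x_n)$; what you assert is a trivializing field $\partial_t+\sum_j a_j\partial_{x_j}$ with $a_j(0,t)=0$. The latter requires the strictly stronger membership $f_2-f_1\in (F)+\M\cdot(\partial F/\partial x_1,\ldots,\partial F/\partial x_n)$, i.e.\ coefficients of the $\partial_{x_j}F$ lying in $\M$, and this does not follow from the weaker one: without it the integrated flow need not fix the origin, and one only obtains equivalences $(V(f_{t_0}),0)\cong (V(f_t),\phi_t(0))$ of germs at a \emph{moving} base point. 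This is not a technicality. For $F=(x_1-t)^2$ one has $\partial_t F=-\partial_{x_1}F$, so the weak membership holds, yet the germ of $V(f_t)$ at $0$ is a double point for $t=0$ and empty for $t\neq 0$; the inference you invoke is false as stated. The conceptual point is that $T(f)=(f)+(\partial f/\partial x_1,\ldots,\partial f/\partial x_n)$ is the tangent space to the orbit of the contact group \emph{allowed to move the base point}, whereas $(f)+\M\cdot(\partial f/\partial x_1,\ldots,\partial f/\partial x_n)$ governs origin-preserving equivalence --- and Mather--Yau is precisely the assertion that the former still controls the germ at $0$. For isolated singularities the gap closes easily (the flow preserves the relative singular locus, which is then the $t$-axis, so $\phi_t(0)=0$); but in the non-isolated case --- the very case this version of the theorem covers, and the reason the paper cites [GLS] --- the singular locus is positive-dimensional and $\phi_t(0)$ may wander along it. Supplying the missing implication (for instance, showing that $T(h)=T$ forces $\Delta(T)\subseteq (h)+\M\cdot(\partial h/\partial x_1,\ldots,\partial h/\partial x_n)$, or reproducing the argument of [GLS]) is the actual content of the hard direction, and it is absent from your proposal. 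Two minor further points: a proper analytic subset of the $t$-line is discrete and closed but need not be finite (connectedness of its complement, which is all you use, still holds); and the lifting of $\bar\phi$ silently uses its $\M$-adic continuity, automatic for morphisms of local $\C$-algebras but worth stating.
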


That's why Mather and Yau originally baptised $A(X)$ as the \emph{moduli algebra} of the germ of hypersurface $(X,0)\subset (\C^n,0)$: the preceding result tells us that the problem of the classification of germs of hypersurfaces singularities of Isolated Type $(X,0)\subset (\C^n,0)$ up to biholomorphic equivalence is equivalent to that of the classification of their Tjurina algebras $A(X)$ up to $\C$-algebra isomorphism.

In this vein we ask ourselves what seems to be a prior question - algebraic of course, as the methods we will use to attack it -, namely:

\begin{question}\label{pergunta} Fixed $n\geqslant 1$, how to find necessary and sufficient conditions for an ideal $I\,\subset\Oh_{\C^n,0}$ to be the Tjurina ideal of some $f\in\M$?
\end{question}

Clearly, for all $n\geqslant 1$, the zero ideal and the whole ring $\Oh_{\C^n,0}$ are Tjurina ideals of $0$ and $x_1$, say, respectively. 

\begin{example} If $n=1$ then any ideal $I\subset \Oh_{\C^1,0}=\C\{x_1\}$ is of the form $I=\M^k=\langle x_1^k\rangle$, $k\geqslant 0$; this is the Tjurina ideal of $f(x_1)=x_1^{k+1}$.
\end{example}

So, if $n=1$, Question \ref{pergunta} has a trivial answer: any ideal is the Tjurina ideal of some function. However, beginning at $n=2$, this is not true any more. Clearly a necessary condition for an ideal $I\,\subset\Oh_{\C^n,0}$ to be the Tjurina ideal of some $f\in \Oh_{\C^n,0}$ is that the minimal number of generators of $I$ should be at most $n+1$, but this is not sufficient as we see in the next

\begin{example}\label{ex1} Let $I=\M^2=\langle x_1^2,x_1x_2,x_2^2\rangle\subset\C\{x_1,x_2\}=\Oh_{\C^2,0}$ - i.e., the ideal defining a planar triple point - is not of the form $T(f)$, for any \linebreak $f(x_1,x_2)\in\Oh_{\C^2,0}$. Indeed, $\dim_{\C}\,\Big(\frac{\Oh_{\C^2,0}}{I}\Big)=3$ and it is known (cf. \cite{H1}, Example 14.1) that there exists - up to contact equivalence - only one $f(x_1,x_2)$ such that $\dim_{\C}\,\Big(\frac{\Oh_{\C^2,0}}{T(f)}\Big)=3$, namely $x_1^4+x_2^2$. Now in $\frac{\Oh_{\C^2,0}}{I}$ any non-invertible element $\xi$ satisfies $\xi^2=0$ while in $\frac{\Oh_{\C^2,0}}{T(x_1^4+x_2^2)}$, $\eta=\bar{x_1}$, the residual class of $x_1$, is such that $\eta^2\neq 0$.\end{example}

\begin{example}\label{nretas} Let $n\geqslant 2$ and consider $I=\langle x_1x_2,\ldots,x_{n-1}x_n\rangle =\langle\{x_ix_j\}_{i<j}\rangle$, an ideal defining the union of $n$ lines in $(\C^n,0)$ meeting transversally at $0$. If $n\geqslant 4$, then $I$ is not a Tjurina ideal of any $f\in\Oh_{\C^n,0}$. Indeed, $I$ is minimally generated by $\frac{n(n-1)}{2}$ monomials, which exceeds $n+1$ if $n\geqslant 4$. If $n=3$ we see that for $f=x_1x_2x_3$, $T(f)=\langle x_2x_3,x_1x_3,x_1x_2\rangle=I$. We will show below (Proposition \ref{principal4}) that in the case $n=2$, the principal ideal $I=\langle x_1x_2\rangle$ is \emph{not} a Tjurina ideal.
\end{example}

Notice that for a given $f\in \M\subset\Oh_{\C^n,0}$ the ideal $T(f)$ defines the \emph{singular (closed) subscheme} of the germ $(X,0)\subset (\C^n,0)$ defined by $f$. So our Question \ref{pergunta} above has an algebraic-geometric counterpart: which closed subchemes of $(\C^n,0)$ are singular subschemes of germs of hypersurfaces in $(\C^n,0)$? With this geometric viewpoint it is of course important to emphasize that the closed subscheme of $(\C^n,0)$ defined by some ideal $I$ is in general non-reduced and shall not be confused with the one defined by the radical $\sqrt{I}$, which is reduced. For example, in $\Oh_{\C^2,0}$ the ideal $\M$ is the Tjurina ideal of $f=x_1x_2$ but the previous Example \ref{ex1} has shown that the ideal $\M^2$ is not a Tjurina ideal.

With this scheme theoretic phrasing, our question was mentioned before by Hauser in \cite{Ha} and addressed by Aluffi in \cite{A} using what he refers to as the \emph{$\mu$-class} of the closed subscheme, in an appropriate Chow group. Aluffi, in (loc.cit.) obtains obstructions for a closed scheme to be the singular scheme of some hypersurface and out of his considerations, several interesting \emph{non-examples} - subschemes that are \emph{not} singular subschemes of hypersurfaces - are presented. The $\mu$-class is defined only for singular schemes of hypersurfaces. Our approach works for \emph{any} closed subscheme of $(\C^n,0)$ (or rather any ideal of $\Oh_{\C^n,0}$) providing sufficient and necessary conditions for an ideal $I$ to be a Tjurina ideal.

In the literature (see \cite{E},\,\cite{IK},\,\cite{Y}) our question is also related to the long standing \emph{Recognition} and \emph{Reconstruction} problems. The Recognition problem is to decide whether a given commutative $\C$-algebra is isomorphic to the Tjurina algebra of some hypersurface singularity, whereas the Reconstruction problem is to find an equation for the hypersurface singularity out of which some given Tjurina algebra arises. We remark that a corollary of our approach is a solution to the problems of Recognition and Reconstruction. 

We now describe the contents of the paper. In Section \ref{primeira}, along with some preliminaries on Tjurina ideals, we introduce and investigate some properties of the set $\Delta(I)$ of antiderivatives of an arbitrary ideal $I$, which is a natural set of elements of $\Oh_{\C^n,0}$ to look at in the search for a solution $f$ for the equation $I=T(f)$. We also suggest an easily applicable method for computation of $\Delta(I)$ in examples, with routines already implemented in \emph{SINGULAR} \cite{DGPS}. In Section \ref{segunda} we present, by means of the introduction of two simple and quite computable operations on arbitrary ideals of $\Oh_{\C^n,0}$, our main result, namely a characterization of the ideals $I$ for which the equation $I=T(f)$ admits a solution $f$. In other words, we characterize Tjurina ideals giving an explicit answer for Question \ref{pergunta}. In Section \ref{terceira}, some properties of the formation of Tjurina ideals are discussed in the monomial case, in connection with recent results of Epure and Schulze \cite{ES}.




\section{Generalities}\label{primeira}

\subsection{Tjurina Ideals}

As we have indicated in the Introduction, we will be concerned with Tjurina ideals of germs of functions $f\in \Oh_{\C^n,0}$. In what follows we fix, once and for all, a basis $\{x_1,\ldots,x_n\}$ - minimal set of generators  - of the maximal ideal $\M\subset\Oh_{\C^n,0}$; then we identify $\Oh_{\C^n,0}=\R$ and compute $T(f)=\langle f,\frac{\partial f}{\partial x_1},\ldots, \frac{\partial f}{\partial x_n}\rangle$ with respect to this basis. Occasionally, in examples with small $n$ we use $x,\,\,\{x,y\},\,\,\{x,y,z\},$ etc. instead of $x_1,\,\,\{x_1,x_2\},\,\,\{x_1,x_2,x_3\},$ etc. Below we collect some elementary, straightforward properties of the formation of Tjurina ideals:

\begin{remark}\label{easy1} Let $f,\,g\in \Oh_{\C^n,0}$. Then
\item[(i)] $T(f+g)\subseteq T(f)+T(g)$;
\item[(ii)] $T(fg)\subseteq fT(g)+gT(f)\subseteq T(f)T(g)\subseteq T(f)\cap T(g)$.
\item[(iii)] If $\langle f \rangle=\langle g\rangle$ then $T(f)=T(g)$;
\end{remark}


\noindent Notice that $(iii)$ in the previous Remark allows us to talk about $T$ \emph{of a principal ideal}. It will be useful to be able to extend $T$ to an arbitrary ideal, as follows.

\begin{definition}\label{defTgeral}Let $J=\langle g_1,\ldots,g_q\rangle$ be an ideal of $\Oh_{\C^n,0}$. We denote by $T(J)$ the ideal $T(g_1)+\ldots+T(g_q)$.
\end{definition}

It is an easy matter, based on general properties (cf. Remark \ref{easy1}), to check that indeed the ideal $T(J)$ doesn't depend on the particular choice of generators for $J$. It shall be referred to as \emph{the Tjurina ideal of $J$}. The properties below are easy to verify.

\begin{remark}
\item[(i)] $J\subseteq T(J)$;
\item[(ii)] If $J_1\subseteq J_2$ is an inclusion of ideals then $T(J_1)\subseteq T(J_2)$;
\item[(iii)] If $\{J_{\lambda}\}_{\lambda\in\Lambda}$ is any family of ideals of $\Oh_{\C^n,0}$ then $T(\sum_{\lambda}J_{\lambda})=\sum_{\lambda}T(J_{\lambda})$;
\item[(iv)] $T(J_1\cap J_2)\subseteq T(J_1)\cap T(J_2)$.
\end{remark}

\medskip

In addition to the preceding considerations relative to Tjurina ideals and in view of our guiding Question \ref{pergunta} in which an ideal $I$ is fixed, it is natural to introduce the concept below:

\begin{definition}\label{defdelta}For any given ideal $I\subseteq \Oh_{\C^n,0}$ we define the \emph{set of antiderivatives} of $I$ by means of \[\Delta(I)=\{f\in \Oh_{\C^n,0}\,|\,T(f)\subseteq I\}.\]
\end{definition}\medskip

An expression for $\Delta(I)$, free of coordinates, can be given. We recall that a $\C$-derivation $\delta$ of $\Oh_{\C^n,0}$ is a $\C$-linear map $\delta$ from $\Oh_{\C^n,0}$ to itself that satisfies the Leibniz' rule: for all $f,\,g\in\Oh_{\C^n,0}$, $\delta(fg)=f\delta(g)+g\delta(f).$ The set $Der_{\C}(\Oh_{\C^n,0})$ of $\C$-derivations of $\Oh_{\C^n,0}$ is naturally a $\Oh_{\C^n,0}$-module, which is freely generated by the partial derivatives $\frac{\partial}{\partial x_i}$, $i=1,\ldots, n$. It is then easy to check that $\Delta(I)$ coincides with \[\{f\in I\,|\,\delta(f)\in I,\,\mbox{for every}\,\,\delta \in\, Der_{\C}(\Oh_{\C^n,0})\,\}.\] This shows in particular that $\Delta(I)$ does not depend on our choice of parameters $\{x_1,\ldots,x_n\}$ used to compute Tjurina ideals.

If $I$ is a non-zero ideal of $\Oh_{\C^n,0}$, we use $\mult(I)$ to denote \linebreak $\min\{\mult(g)\,|\,g\in I\}=\max\{k\,|\,I\subseteq \M^k\}$. Below we summarize some easy properties of $\Delta$. 

\begin{remark}\label{easy}
\item[(i)] $\Delta(I)$ is an ideal of $\Oh_{\C^n,0}$;
\item[(ii)] $I^2\subseteq \Delta(I)\subseteq I$;
\item[(iii)] If $\{I_{\lambda}\}_{\lambda\in\Lambda}$ is any family of ideals of $\Oh_{\C^n,0}$ then $\Delta(\bigcap_{\lambda}I_{\lambda})=\bigcap_{\lambda}\Delta(I_{\lambda})$;
\item[(iv)] If $I\subseteq J$ is an inclusion of ideals in $\Oh_{\C^n,0}$ then $\Delta(I)\subseteq \Delta(J)$;
\item[(v)] If $I\neq \langle 0\rangle$ and $I\neq \langle 1\rangle$ then $\mult(\Delta(I))\geqslant\mult(I)+1$.
\end{remark}

\begin{proof}
The verification of $(i)$ to $(iv)$ is straightforward. When it comes to $(v)$, we always have $\mult(\Delta(I))\geqslant\mult(I)$. If equality holds, take $g\in\Delta(I)$ such that $\mult(g)=\mult(\Delta(I))=\mult(I)$. In this case all partial derivatives $\frac{\partial g}{\partial x_i}$ of $g$ would belong to $I$ and we would obtain $\mult(\frac{\partial g}{\partial x_i})\geqslant\mult(I)=\mult(g)$. This is only possible if $g$ is an invertible element of $\Oh_{\C^n,0}$. Hence $I=\langle 1\rangle$. 
\end{proof} 

\begin{example}\label{potencia} Let $\M^k$, with $k\geqslant 1$, denote the $k$-th power of the maximal ideal of $\Oh_{\C^n,0}$. Clearly $\Delta(\M^k)\supset\M^{k+1}$ and according to Remark \ref{easy}, (v), if $f\in\Delta(\M^k)$ then $\mult(f)\geqslant \mult(\M^k)+1=k+1$. Hence, $f\in\M^{k+1}$. It follows that $\Delta(\M^k)=\M^{k+1}$.\end{example}

\begin{example}\label{principal1}Let $I=\langle f^k \rangle$ be a principal ideal, $k\geqslant 1$, with $f\in\Oh_{\C^n,0}$ an irreducible element. Then $\Delta(I)=\langle f^{k+1}\rangle$: because if $g\in \Delta(I)$, we have $g=af^k$, for some $a\in\Oh_{\C^n,0}$ and then, for all $i=1,\ldots,n$, $\frac{\partial g}{\partial x_i}=f^k.\frac{\partial a}{\partial x_i}+akf^{k-1}\frac{\partial f}{\partial x_i}$ is also a multiple of $f^k$. Hence $f$ must divide $ka\frac{\partial f}{\partial x_i}$, for all $i=1,\ldots,n$. Now, if $f$ divides $\frac{\partial f}{\partial x_i}$ for all $i=1,\ldots,n$ then $\langle \frac{\partial f}{\partial x_1},\ldots, \frac{\partial f}{\partial x_n}\rangle\subseteq \langle f \rangle$ and we would deduce as in \cite{B}, Lemma 1.2.13, that $f=0\in\Oh_{\C^n,0}$, which is not the case. So, for some $i$, $f$ does not divide $\frac{\partial f}{\partial x_i}$ and, being an irreducible element, $f$ must divide $a$. Hence $g\in\langle f^{k+1}\rangle$.\end{example}

\begin{remark} Notice that if one is interested in fields $\k$ of positive characteristic $p$, the preceding two Examples are false, being enough to take \linebreak $I=\langle x_1^p\rangle\subset \k[[x_1]]$ as counterexamples. In this case $\Delta(I)=I\neq \langle x_1^{p+1}\rangle$.
\end{remark}

\begin{proposition}\label{primarios} If $\mathfrak{p}\subset\Oh_{\C^n,0}$ is a prime ideal and $I\subset\Oh_{\C^n,0}$ is a $\mathfrak{p}$-primary ideal, then its ideal of antiderivatives $\Delta(I)$ is also a $\mathfrak{p}$-primary ideal.\end{proposition}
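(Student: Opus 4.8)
The plan is to verify separately the two defining properties of a $\mathfrak{p}$-primary ideal: that $\sqrt{\Delta(I)}=\mathfrak{p}$, and that $\Delta(I)$ is itself primary. (Properness of $\Delta(I)$ is automatic, since $\Delta(I)\subseteq I$ and $I$, being primary, is proper.) The radical computation is immediate from the sandwich $I^2\subseteq\Delta(I)\subseteq I$ recorded in Remark \ref{easy}, (ii): taking radicals and using $\sqrt{I^2}=\sqrt{I}$ gives $\mathfrak{p}=\sqrt{I^2}\subseteq\sqrt{\Delta(I)}\subseteq\sqrt{I}=\mathfrak{p}$, so all three coincide and $\sqrt{\Delta(I)}=\mathfrak{p}$.

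For the primary property I would argue directly, bridging two applications of the primary condition on $I$ by means of the Leibniz rule. Concretely, I aim to show: if $ab\in\Delta(I)$ and $a\notin\Delta(I)$, then $b\in\mathfrak{p}=\sqrt{\Delta(I)}$. Suppose, toward a contradiction, that $b\notin\mathfrak{p}$. Since $ab\in\Delta(I)\subseteq I$ and $I$ is $\mathfrak{p}$-primary with $b\notin\mathfrak{p}=\sqrt{I}$, the primary condition on $I$ forces $a\in I$.

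Next I would exploit the coordinate-free description of $\Delta$ from Section \ref{primeira}: the membership $ab\in\Delta(I)$ means $\delta(ab)\in I$ for every $\delta\in Der_{\C}(\Oh_{\C^n,0})$. Expanding by Leibniz, $\delta(ab)=a\,\delta(b)+b\,\delta(a)$; since $a\in I$ the term $a\,\delta(b)$ lies in $I$, and therefore $b\,\delta(a)\in I$. Invoking once more that $I$ is $\mathfrak{p}$-primary together with $b\notin\sqrt{I}$, I conclude $\delta(a)\in I$ for every $\delta$. Combined with $a\in I$, this says exactly that $a\in\Delta(I)$, contradicting our assumption. Hence $b\in\mathfrak{p}$, which is precisely the primary condition for $\Delta(I)$.

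I do not anticipate a genuine obstacle: the argument is a short double use of the primary property of $I$, linked by the Leibniz rule. The only step requiring mild care is the second application, deducing $\delta(a)\in I$ from $b\,\delta(a)\in I$; this is legitimate exactly because $b$ was taken outside $\mathfrak{p}=\sqrt{I}$, and it is here that the full strength of the hypothesis — that $I$ is primary, not merely that $\sqrt{I}$ is prime — is genuinely used.
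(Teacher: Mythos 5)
Your proof is correct. It uses the same raw ingredients as the paper's own proof --- the sandwich $I^2\subseteq\Delta(I)\subseteq I$ for the radical, and two applications of the primary property of $I$ linked by the Leibniz rule --- but organizes them contrapositively, which genuinely changes the shape of the argument. The paper argues directly: from $fg\in\Delta(I)$ and $g\notin\Delta(I)$ it splits into cases according to \emph{why} $g\notin\Delta(I)$ (either $g\notin I$, or $g\in I$ but some $\partial g/\partial x_i\notin I$), uses in each case the form ``$xy\in I$, $y\notin I\Rightarrow x\in\sqrt{I}$'' to obtain $f^k\in I$, and then must finish with an extra verification that $f^k\in I$ forces $f^{k+1}\in\Delta(I)$. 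You instead assume $b\notin\mathfrak{p}$ and use the cancellation form ``$xb\in I$, $b\notin\sqrt{I}\Rightarrow x\in I$'' twice --- once on $ab$ and once on $b\,\delta(a)$ --- landing both $a$ and all of its derivatives in $I$, i.e.\ $a\in\Delta(I)$, contradicting $a\notin\Delta(I)$. This eliminates both the case analysis and the final power-raising step; the price is that you need the identification $\sqrt{\Delta(I)}=\mathfrak{p}$ \emph{before} proving primariness (so that the conclusion $b\in\mathfrak{p}$ reads as $b\in\sqrt{\Delta(I)}$), but that identity is established first in either treatment, exactly as you do it. The paper's direct version is marginally more constructive, since it exhibits the explicit exponent $k+1$ with $f^{k+1}\in\Delta(I)$; yours is more uniform and shorter, and your closing remark correctly isolates where primariness (rather than mere primeness of the radical) is indispensable.
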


\begin{proof}
We must check that $\Delta(I)$ is primary, that is, we will show that if $f,g\in \Oh_{\C^n,0}$, $fg\in\Delta(I)$ and $g\not\in\Delta(I)$, then $f\in\sqrt{\Delta(I)}$. We argue as follows: if $g\not\in\Delta(I)$ there are two possibilities to consider. The first one is $g\not\in I$. In this case, as $I$ is primary, and $fg\in I$ there exists some $k\geqslant 1$ such that $f^k\in I$. The second possibility is that $g\in I$ and then it is forced that, for some $i=1,\ldots, n$, $\frac{\partial g}{\partial x_i}\not\in I$. From $\frac{\partial (fg)}{\partial x_i}\in I$ we deduce $f\frac{\partial g}{\partial x_i}\in I$. Again using that $I$ is primary, there exists some $k\geqslant 1$ such that $f^{k}\in I$. So we see that in any of the two possibilities, $f^k\in I$. It is then easy to check that $f^{k+1}\in\Delta(I)$ as we wanted. Now, the assertion on being $\mathfrak{p}$-primary is a consequence of Remark \ref{easy}, (ii). 
\end{proof}

By taking primary decomposition of ideals, Remark \ref{easy},(iv) and the previous result imply that when it comes to compute $\Delta(I)$ it is possible to restrict attention to primary ideals. One instance where this observation can be applied directly is presented in the example below.

\begin{example}\label{principal2} Let $I=\langle f\rangle $ be a non-trivial principal ideal and $f=f_1^{k_1}\ldots f_r^{k_r}$ be a factorization of $f$ into irreducible, non-associated, elements with positive $k_1,\ldots,k_r$. Then $I=\langle  f_1^{k_1}\rangle \cap\ldots\cap \langle  f_r^{k_r}\rangle $. It follows from Remark \ref{easy}, (iii) and Example \ref{principal1} that $\Delta(I)=\langle f_1^{k_1+1}\rangle \cap\ldots\cap \langle  f_r^{k_r+1}\rangle =\langle  f_1^{k_1+1}\ldots f_r^{k_r+1}\rangle $.
\end{example}

\subsection{Computation of $\Delta(I)$}

To this point we have computed explicitly the ideal of antiderivatives $\Delta(I)$ only in the case where $I$ is a power of the maximal ideal (cf. Example \ref{potencia}) or $I$ is a principal ideal (cf. Example \ref{principal2}).  Now we suggest a method to compute $\Delta(I)$ for an arbitrary ideal $I\subset \Oh_{\C^n,0}$. Our procedure will be illustrated with a number of examples obtained using basic routines already implemented in the software \emph{SINGULAR}, \cite{DGPS}.

Fix a basis $\{x_1,\ldots,x_n\}$ - minimal set of generators  - of the maximal ideal $\M\subset\Oh_{\C^n,0}$; we identify $\Oh_{\C^n,0}=\R$ as before. A free $\Oh_{\C^n,0}$-module of certain rank $\ell$ will be denoted by $F_{\ell}$; also, for any $\underline{b}=(b_1,\ldots,b_{\ell})^{t}$, $\underline{c}=(c_1,\ldots,c_{\ell})^t\in F_{\ell}$ we put $\underline{b}\cdot\underline{c}$ to denote the element $\sum_{k=1}^{\ell}b_k c_k\in \Oh_{\C^n,0}$.

We assume the ideal $I$ given by generators $I=\langle f_1,\ldots,f_s\rangle $. Then some element $g=\underline{a}\cdot\underline{f} = \sum_k a_kf_k\in I$ is an element of $\Delta(I)$ if and only if, for all $j=1,\ldots,n$, the partial derivative $$\frac{\partial g}{\partial x_j}=\sum_k a_k \frac{\partial f_k}{\partial x_j}+\sum_k \frac{\partial a_k}{\partial x_j}f_k=\underline{a}\cdot\frac{\partial \underline{f}}{\partial x_j}+\frac{\partial \underline{a}}{\partial x_j}\cdot\underline{f}$$ belongs to $I$. In other words, $g=\underline{a}\cdot\underline{f}\in \Delta(I)$ if and only if $\underline{a}\cdot\frac{\partial \underline{f}}{\partial x_j}\in I,$ for all $j=1,\ldots,n$.

For each $j=1,\ldots,n$ we denote by $M^j$ the submodule of $F_{s}$ consisting of the elements $\underline{a}\in F_{s}$ such that $\underline{a}\cdot\frac{\partial \underline{f}}{\partial x_j}\in I$. Then $M:=\bigcap_{j=1}^n\,M^j\subseteq F_s$ is a finitely generated $\Oh_{\C^n,0}$-submodule of $F_s$ consisting of elements $\underline{a}\in\,F_{s}$ such that $g=\underline{a}\cdot\underline{f}\in\Delta(I)$. Let $\underline{m}_{\,1},\ldots,\underline{m}_{\,q}\in F_s$ be generators of $M$: $\underline{m}_{\,i}=(m_{1i},\ldots,m_{si})^t$, where $m_{ki}\in \Oh_{\C^n,0}$.\smallskip

Notation being as above we have

\begin{proposition}\label{compdelta} The ideal of antiderivatives $\Delta(I)$ is generated by $\underline{m}_{\,i}\cdot\underline{f}$, for $i=1,\ldots,q$.
\end{proposition}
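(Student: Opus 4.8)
The plan is to realize $\Delta(I)$ as the image, under the $\Oh_{\C^n,0}$-linear evaluation map $\underline{a}\mapsto\langle\underline{a},\underline{f}\rangle$, of the submodule $M=\bigcap_{j=1}^n M^j\subseteq F_s$ singled out just before the statement. Because the proposition claims that generators $\underline{m}_{\,1},\ldots,\underline{m}_{\,q}$ of $M$ produce generators $\langle\underline{m}_{\,i},\underline{f}\rangle$ of $\Delta(I)$, and because the evaluation map is $\Oh_{\C^n,0}$-linear (so it carries module generators of $M$ to ideal generators of its image), the entire statement reduces to the single identity $\Delta(I)=\{\,\langle\underline{a},\underline{f}\rangle:\underline{a}\in M\,\}$.

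First I would record, invoking Remark \ref{easy}, (ii), that every $g\in\Delta(I)$ already lies in $I$ and hence admits at least one representation $g=\langle\underline{a},\underline{f}\rangle$ with $\underline{a}\in F_s$; conversely every such $\langle\underline{a},\underline{f}\rangle$ lies in $I$, so the evaluation map surjects onto $I$ and in particular its image contains $\Delta(I)$. The computational core is the Leibniz expansion already displayed before the statement,
\[
\frac{\partial g}{\partial x_j}=\Big\langle\underline{a},\frac{\partial\underline{f}}{\partial x_j}\Big\rangle+\Big\langle\frac{\partial\underline{a}}{\partial x_j},\underline{f}\Big\rangle,
\]
in which the second summand is manifestly an element of $I$, being an $\Oh_{\C^n,0}$-combination of the $f_k$.

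The heart of the argument is the resulting equivalence: for a fixed lift $g=\langle\underline{a},\underline{f}\rangle$, one has $\partial g/\partial x_j\in I$ if and only if $\langle\underline{a},\partial\underline{f}/\partial x_j\rangle\in I$, since the two differ by the term $\langle\partial\underline{a}/\partial x_j,\underline{f}\rangle\in I$. Combined with $g\in I$, this yields $g\in\Delta(I)$ (that is, $T(g)\subseteq I$) if and only if $\langle\underline{a},\partial\underline{f}/\partial x_j\rangle\in I$ for all $j$, i.e. if and only if $\underline{a}\in M$. Reading this equivalence both ways shows that $M$ is exactly the preimage of $\Delta(I)$ under the evaluation map; since $\Delta(I)$ is contained in the image $I$ of that map, applying the map to $M$ returns $\Delta(I)$ precisely, which is the identity I need.

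The point deserving the most care — though it is not deep — is that the decomposition $g=\langle\underline{a},\underline{f}\rangle$ is highly non-unique, so a priori membership $\underline{a}\in M$ might depend on the chosen lift rather than on $g$ itself. The displayed identity dissolves this worry: the condition $\langle\underline{a},\partial\underline{f}/\partial x_j\rangle\in I$ is forced by $\partial g/\partial x_j\in I$ for \emph{every} lift $\underline{a}$ of $g$, because the correction term always lands in $I$. Hence no genuine choice is being made and the equivalence $g\in\Delta(I)\iff\underline{a}\in M$ is representation-independent. I expect this to be the only subtle step; the remaining verifications — that each $M^j$ and hence $M$ is an $\Oh_{\C^n,0}$-submodule, and that an $\Oh_{\C^n,0}$-linear map sends module generators to ideal generators of the image — are routine.
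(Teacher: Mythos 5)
Your proof is correct and takes essentially the same approach as the paper: both realize $\Delta(I)$ as the image of $M$ under the $\Oh_{\C^n,0}$-linear evaluation map $\underline{a}\mapsto\langle\underline{a},\underline{f}\rangle$ (via the Leibniz expansion showing the condition is independent of the chosen lift) and then pass from generators of $M$ to generators of $\Delta(I)$ by linearity. The paper's proof is simply terser, leaving the Leibniz computation and the lift-independence implicit in the discussion preceding the statement, which you have correctly spelled out.
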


\begin{proof}

As discussed above, $g\in \Delta(I)$ if and only if $g=\underline{a}\cdot\underline{f}\in \Delta(I)$ for some $\underline{a}\in F_s$ such that $\underline{a}\cdot\frac{\partial \underline{f}}{\partial x_j}\in I,$ for all $j=1,\ldots,n$. That is, if and only if there exists some $\underline{a}\in M$ such that $g=\underline{a}\cdot\underline{f}$. Since $M$ is generated by the $\underline{m}_{\,i}$, the result follows from the $\Oh_{\C^n,0}$-linearity of the $(-)\cdot\underline{f}$ product.
\end{proof} 

Here we illustrate how we used the software \emph{SINGULAR}, \cite{DGPS} to compute the ideal of antiderivatives of a given ideal. We do not claim originality, since we only applied routines already implemented by the software developers and collaborators.

\begin{example}Let $I=\langle x^2,xy,yz,z^2,y^2-xz\rangle\subset \Oh_{\C^3,0}=\C\{x,y,z\}$. With the interface of SINGULAR open, type

\begin{flushleft}
\texttt{ring r=0,(x,y,z),ds;}
\end{flushleft}

\noindent This declares you are working over a field of characteristic zero (the field of rational numbers, indeed), variables $x,y,z$ and says we work over the corresponding ring of power series. We now declare the generators of the ideal $I$ by means of a matrix with one row and (in the present case) five columns: type \begin{flushleft}
\texttt{matrix B[1][5]=x2,xy,yz,z2,y2-xz;}
\end{flushleft}

\noindent Now to compute the submodule $M^1$, (same notation as above), we declare a matrix with the partial derivatives of the given generators in terms of $x$ as entries. Then compute $M^1$ as follows: 

\begin{flushleft}
 \texttt{matrix A1[1][5]=diff(B,x); def m1=modulo(A1,B)};
 \end{flushleft}
 
\noindent Likewise, compute $M^2$ and $M^3$ by taking partial derivatives with respect to $y$ and $z$:

\begin{flushleft}
\texttt{matrix A2[1][5]=diff(B,y); def m2=modulo(A2,B);}\\
\texttt{matrix A3[1][5]=diff(B,z); def m3=modulo(A3,B);}
\end{flushleft}

\noindent Now, we put

\begin{flushleft}
\texttt{def m=intersect(m1,m2,m3);}\\
\texttt{def M=std(m);}
\end{flushleft}

\noindent This is the definition of $M$ and the computation of a standard basis, with respect to the given monomial order. Now, to obtain $M$, type
\begin{flushleft}
\texttt{print(M);}
\end{flushleft}

\noindent In this case \emph{SINGULAR}, \cite{DGPS} gives

\medskip

\noindent\makebox[\textwidth]{%
\small
\(
\left( \begin{array}{ccccccccccccccc}
z & 0 & 0 & -y & x & 0 & 0 & 0 & -z & y & 0 & 0 & 0 & 0 & z^2\\
0 & 0 & -z & 2x & 0 & -2z & 0 & 0 & 2y & 0 & 0 & 0 & 0 & z^2 & 0\\
0 & -2y & 3x & 0 & 0 & 0 & 0 & y & 0 & 0 & 0 & z & 0 & 0 & 0\\
0 & x & 0 & 0 & 0 & 0 & y & 0 & 0 & 0 & z & 0 & 0 & 0 & 0\\
2x & 0 & -y & 0 & 0 & y & 0 & z & 0 & 0 & 0 & 0 & z^2 & 0 & 0\\
\end{array} \right)\)}

\medskip

\noindent Hence, as in the previous Proposition, we obtain generators of $\Delta(I)$ computing $$(-)\cdot(x^2,xy,yz,z^2,y^2-xz),$$ for all (transposed) columns of the previous matrix. To speed up the calculations we can type 

\begin{flushleft}
\texttt{ideal delta=B*M;}\\
\texttt{ideal Delta=std(delta);}\\
\texttt{Delta;}
\end{flushleft}

The output lists the generators of $\Delta(I)$. In the present case we obtain $$\Delta(I)=\langle x^3,x^2y,2xy^2-x^2z,\,y^3-3xyz,\,2y^2z-xz^2,\,yz^2,z^3,x^2z^2\rangle.$$
\end{example}

\section{Two key conditions}\label{segunda}

\subsection{T-fullness} We now introduce $T$-fullness, a natural concept which together with $T$-dependence (cf. Definition \ref{tdependence} below) is present in our characterization of Tjurina ideals of $\Oh_{\C^n,0}$ (see Theorem \ref{main}). 

\smallskip

\noindent Recall the definition of the Tjurina ideal of an arbitrary ideal of $\Oh_{\C^n,0}$ (cf. Definition  \ref{defTgeral}).

\begin{definition}\label{tfullness}Let $I$ be an ideal of $\Oh_{\C^n,0}$. In general we have $T(\Delta(I))\subseteq I$. We say that \emph{$I$ is $T$-full} if $I=T(\Delta(I))$.\end{definition}

\begin{example}\label{naotfull} Let $I=\langle xy,x^4+y^3\rangle \subset\Oh_{\C^2,0}=\C\{x,y\}$. Following the routine of SINGULAR, \cite{DGPS}, described as in the example used to illustrate Proposition \ref{compdelta},  we compute $$M=\begin{pmatrix}
    4y^2  & 3x^3   & xy   &  y^3  & x^4 \\
    x  & y   & 0 &  0 &  0\\
\end{pmatrix}\subset F_2.$$

\noindent Hence, $\Delta(I)=\langle 5xy^3+x^5,4x^4y+y^4,x^2y^2,xy^4,x^5y\rangle $. Then $xy\not\in T(\Delta(I))=\langle x^2y,xy^2,x^4+y^3,x^5\rangle$, so $I$ is not $T$-full.\end{example}

$T$-fullness is a property that characterizes Tjurina ideals in the special case of principal ideals. This is discussed in detail in the result below:

\begin{proposition}\label{principal4} Let $n\geqslant 2$ and $I\subset \Oh_{\C^n,0}$ be a (non-trivial) principal ideal.
\begin{itemize}
\item[(i)] If $I=\langle f^k \rangle $ with $f\in \Oh_{\C^n,0}$ irreducible, $k\geqslant 1$. Then $I$ is a Tjurina ideal if and only if $f$ has multiplicity one;
\item[(ii)] Let $I=\langle f\rangle $, $f=f_1^{k_1}\ldots f_r^{k_r}$, with non associated irreducible factors $f_i$ and $r\geqslant 2$. Then $I$ is not a Tjurina ideal;
\item[(iii)] $I$ is a Tjurina ideal if and only if $I$ is $T$-full.
\end{itemize} 
\end{proposition}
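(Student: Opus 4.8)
The plan rests on one elementary but decisive computation, which I would isolate first: for any nonzero $g\in\M$ over a field of characteristic zero one has $\ord(T(g))=\ord(g)-1$. Writing $d=\ord(g)\geq 1$ and letting $g_d$ be the leading form, Euler's identity $d\,g_d=\sum_i x_i\,\partial g_d/\partial x_i$ forces not all $\partial g_d/\partial x_i$ to vanish (since $d\neq 0$ and $g_d\neq 0$), so $\min_i\ord(\partial g/\partial x_i)=d-1$ and hence $\ord(T(g))=\min(d,d-1)=d-1$. Alongside this I would keep at hand the two facts already proved: $\Delta((f^k))=(f^{k+1})$ for $f$ irreducible (Example \ref{principal1}) and $\Delta((f_1^{k_1}\cdots f_r^{k_r}))=(f_1^{k_1+1}\cdots f_r^{k_r+1})$ in general (Example \ref{principal2}). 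The structural point I will exploit repeatedly is that $\Delta$ of a principal ideal is again \emph{principal}.

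For (i), the reverse implication is direct: if $\mult(f)=1$ then some $\partial f/\partial x_i$ has nonzero constant term, hence is a unit, so $T(f)=\Oh_{\C^n,0}$; since $T(f^{k+1})=f^k\,T(f)$ (the factors $k+1$ being units), we obtain $T(f^{k+1})=(f^k)=I$, exhibiting $I$ as a Tjurina ideal. For the forward implication, suppose $I=(f^k)=T(g)$. Then $T(g)\subseteq I$ gives $g\in\Delta(I)=(f^{k+1})$, so $\ord(g)\geq(k+1)\,\mult(f)$; on the other hand the order formula yields $\ord(g)-1=\ord(T(g))=\ord(I)=k\,\mult(f)$. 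Combining the two, $k\,\mult(f)+1\geq(k+1)\,\mult(f)$, which forces $\mult(f)\leq 1$, that is $\mult(f)=1$.

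Part (ii) falls out of the same order bookkeeping. Writing $m_i=\mult(f_i)$ and $N=\ord(I)=\sum_i k_i m_i$, if $I=T(g)$ then $g\in\Delta(I)=(f_1^{k_1+1}\cdots f_r^{k_r+1})$, so $\ord(g)\geq N+\sum_i m_i\geq N+r$. The order formula then gives $\ord(I)=\ord(T(g))=\ord(g)-1\geq N+r-1$, which for $r\geq 2$ contradicts $\ord(I)=N$; hence no such $g$ exists.

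For (iii), the implication ``Tjurina $\Rightarrow$ $T$-full'' needs no principality: if $I=T(f)$ then $f\in\Delta(I)$, so $(f)\subseteq\Delta(I)$, whence $I=T(f)\subseteq T(\Delta(I))\subseteq I$ by monotonicity of $T$ on ideals together with the inclusion of Definition \ref{tfullness}, giving equality. The converse is where principality enters, and this is the step I expect to be the main obstacle, precisely because $T$-fullness is phrased through $T(\Delta(I))$, a \emph{sum} of Tjurina ideals over generators, whereas being a Tjurina ideal demands a \emph{single} generator. The resolution is that $\Delta(I)$ is itself principal here, say $\Delta(I)=(G)$ with $G=f_1^{k_1+1}\cdots f_r^{k_r+1}$, so $T(\Delta(I))=T(G)$ is genuinely the Tjurina ideal of one element. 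Comparing orders, $\ord(T(\Delta(I)))=\ord(G)-1=\sum_i(k_i+1)m_i-1=N+\sum_i m_i-1$ while $\ord(I)=N$; $T$-fullness forces these to agree, i.e. $\sum_i m_i=1$, which (each $m_i\geq 1$) happens exactly when $r=1$ and $\mult(f_1)=1$. By part (i) such an $I$ is a Tjurina ideal, closing the equivalence.
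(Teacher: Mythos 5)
Your proof is correct, but it takes a genuinely different route from the paper's. Both arguments pivot on the same structural facts --- Examples \ref{principal1} and \ref{principal2}, i.e.\ that $\Delta$ of a principal ideal $(f_1^{k_1}\cdots f_r^{k_r})$ is again principal, generated by $f_1^{k_1+1}\cdots f_r^{k_r+1}$, and that any solution of $T(g)=I$ must lie in $\Delta(I)$ --- but they extract the conclusion by different mechanisms. The paper works with ideal identities and Nakayama's lemma: for (i) it computes $T(\Delta(I))=T(f^{k+1})=f^kT(f)=IT(f)$ and concludes that $I$ is a Tjurina ideal if and only if $T(f)=(1)$; for (ii) a Leibniz-rule computation shows $T(\Delta(I))\subseteq\M^{r-1}I\subseteq\M I$, and Nakayama rules out $I=T(g)\subseteq T(\Delta(I))$; part (iii) is then declared a consequence of those two proofs. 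Your order formula $\ord(T(g))=\ord(g)-1$ (valid in characteristic zero via Euler's identity) replaces both uses of Nakayama by numerical bookkeeping: it disposes of (ii) instantly, without the Leibniz computation, and it gives the forward direction of (i) by comparing $\ord(g)\geqslant(k+1)\mult(f)$ with $\ord(g)=k\,\mult(f)+1$. Your (iii) is also more explicit than the paper's: you show directly that $T$-fullness of a principal ideal forces $\sum_i \mult(f_i)=1$, hence $r=1$ and $\mult(f_1)=1$, and then invoke (i). Each approach buys something: the paper's containment $T(\Delta(I))\subseteq \M^{r-1}I$ is finer information than an order count, while your Euler-identity lemma is an elementary, reusable fact the paper never isolates, and it makes transparent exactly where characteristic zero enters. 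One line worth adding to your write-up: when you apply the order formula to a putative solution $g$ of $T(g)=I$, note that $g\neq 0$ and $g\in\M$, which holds because $I$ is neither $(0)$ nor $(1)$.
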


\begin{proof}
If $f$ has multiplicity one, clearly $I=\langle f^k \rangle =T(f^{k+1})$. Conversely, if $I=\langle f^k \rangle $ with $f$ irreducible then as we have seen in Example \ref{principal1}, $\Delta(I)$ consists of multiples of $f^{k+1}$. Hence $T(\Delta(I))=T(f^{k+1})=f^kT(f)=IT(f)$. Therefore, if $I$ is a Tjurina ideal it equals $T(f^{k+1})$ and in this case, by Nakayama's Lemma, we conclude that $T(f)=\langle 1 \rangle =\Oh_{\C^n,0}$. Since $f\in\M$ this last condition can occur only if $f$ has multiplicity one. This is part $(i)$.

Now, let $I=\langle f\rangle $ be a principal ideal and $f=f_1^{k_1}\ldots f_r^{k_r}$ as in $(ii)$. We have seen in Example \ref{principal2} that $\Delta(I)$ consists of multiples of $f_1^{k_1+1}\ldots f_r^{k_r+1}$. Hence $T(\Delta(I))=T(f_1^{k_1+1}\ldots f_r^{k_r+1})$. Therefore, if $I$ is a Tjurina ideal it equals $T(f_1^{k_1+1}\ldots f_r^{k_r+1})$. A routine computation using Leibniz' rule shows that for all $j=1,\ldots,n$, $$\frac{\partial (f_1^{k_1+1}\ldots  f_r^{k_r+1})}{\partial x_j}\in\langle  f_1^{k_1}\ldots f_r^{k_r}\rangle \cdot\langle  \{f_1\ldots f_{i-1}\frac{\partial f_i}{\partial x_j}f_{i+1}\ldots f_r\}_{i=1}^r\rangle .$$ As a result $T(f_1^{k_1+1}\ldots f_r^{k_r+1})\subseteq IK$, where $K$ is the ideal generated by $f_1\ldots f_r$ and $$\{f_1\ldots f_{i-1}\frac{\partial f_i}{\partial x_j}f_{i+1}\ldots f_r\}_{i,j},$$ where $i\in\{1,\ldots,r\}$ and $j\in\{1,\ldots,n\}$. Hence $T(\Delta(I))\subseteq\M^{r-1}I$ and since $r\geqslant 2$ we conclude, using Nakayama's Lemma, that $I$ is not a Tjurina ideal. This finishes part $(ii)$.

Part $(iii)$ is a consequence of the proofs presented for $(i)$ and $(ii)$.
\end{proof}

\begin{example}The previous Proposition \ref{principal4}, $(i)$ shows that there is no $f$ with $T(f)$ equal to, say, $I=\langle x_1^2+x_2^3\rangle $. Besides, $(ii)$ applies for $I=\langle  x_1x_2\rangle $, (the case $n=2$ of Example \ref{nretas}) showing that $I$ is not a Tjurina ideal.\end{example}

The general significance (beyond principal ideals) of $T$-fullness in our investigation is apparent in the next result.

\begin{proposition}\label{tfullehnecess} Let $I$ be an ideal of $\Oh_{\C^n,0}$. If $I$ is a Tjurina ideal then $I$ is $T$-full.\end{proposition}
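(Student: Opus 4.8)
The plan is to establish the single inclusion $I \subseteq T(\Delta(I))$. Indeed, the reverse inclusion $T(\Delta(I)) \subseteq I$ holds for \emph{every} ideal $I$ — this is exactly the general fact recorded in Definition \ref{tfullness}, which follows because each generator $h$ of $\Delta(I)$ satisfies $T(h) \subseteq I$ by the defining property of $\Delta(I)$ in Definition \ref{defdelta}. Hence the two inclusions together will yield $I = T(\Delta(I))$, which is precisely $T$-fullness, and all that remains is the forward inclusion.

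To obtain $I \subseteq T(\Delta(I))$, I would start from the hypothesis that $I$ is a Tjurina ideal, say $I = T(f)$ for some $f \in \Oh_{\C^n,0}$, and make the one observation that drives the whole argument: the generator $f$ is itself an antiderivative of $I$. This is immediate from Definition \ref{defdelta}, since the defining equation reads $T(f) = I \subseteq I$, so by definition $f \in \Delta(I)$. Consequently the principal ideal $(f)$ is contained in the ideal $\Delta(I)$.

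From here the conclusion is formal. Applying the monotonicity of the operation $T$ on ideals (the property noted in the Remark following Definition \ref{defTgeral}) to the inclusion $(f) \subseteq \Delta(I)$ gives $T\big((f)\big) \subseteq T(\Delta(I))$. Since $T\big((f)\big) = T(f)$ — this being the one-generator instance of Definition \ref{defTgeral}, consistent with the generator-independence in Remark \ref{easy1}(iii) — we arrive at $I = T(f) \subseteq T(\Delta(I))$. Combined with the general inclusion $T(\Delta(I)) \subseteq I$ above, this forces $I = T(\Delta(I))$, so $I$ is $T$-full.

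I should stress that there is no genuine computational obstacle here; the entire content is the conceptual step that the equation $I = T(f)$ places its own generator $f$ inside $\Delta(I)$. Once that is seen, the antiderivative ideal $\Delta(I)$ is already large enough that $T$ applied to it recovers the full ideal $T(f) = I$, and everything else is bookkeeping with the monotonicity and the elementary properties of $T$ assembled earlier in Section \ref{primeira}.
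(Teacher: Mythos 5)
Your proof is correct and takes essentially the same approach as the paper: the crucial observation in both is that $I = T(f)$ forces $f \in \Delta(I)$, after which $I = T(f) \subseteq T(\Delta(I)) \subseteq I$ gives equality throughout. The only cosmetic difference is that you route the middle inclusion through the monotonicity of $T$ on ideals (the remark following Definition \ref{defTgeral}), whereas the paper expands $f = \sum_i r_i g_i$ in generators of $\Delta(I)$ and applies Remark \ref{easy1} directly --- these amount to the same underlying computation.
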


\begin{proof}
Let $\Delta(I)=\langle g_1,\ldots,g_q\rangle $. If $I$ is a Tjurina ideal, then there exists $f\in\Delta(I)$ such that $I=T(f)$. We may write $f=\sum_{k=1}^q r_k g_k$, for some $r_k\in \Oh_{\C^n,0}$. Using Remark \ref{easy1}, we obtain $I=T(f)\subseteq \sum_{k=1}^q T(r_k g_k)\subseteq \sum_{k=1}^q T(g_k)=T(\Delta(I))\subseteq I$ and equality holds throughout. We conclude that $I$ is $T$-full.
\end{proof}

In the general case $T$-fullness is not sufficient for $I$ to be a Tjurina ideal as shows the next

\begin{example} Let $I=\langle x^2,xy,y^2\rangle =\M^2\subset\Oh_{\C^2,0}=\C\{x,y\}$. We have $\Delta(\M^2)=\M^3=\langle x^3,x^2y,xy^2,y^3 \rangle $, so $T(\Delta(I))=T(x^3)+T(x^2 y)+T(x y^2)+T(y^3)=\M^2=I$ and $I$ is $T$-full. We have seen before that $I$ is not a Tjurina ideal.\end{example}

\subsection{T-dependence} Here we explain the last ingredient which is, together with $T$-fullness (cf. Definition \ref{tfullness}), present in our characterization of Tjurina ideals.

We begin with an example. Consider the ideal $I=\M$ and its ideal of antiderivatives $\Delta(I)=\M^2=\langle x^2,xy,y^2 \rangle $ inside $\Oh_{\C^2,0}=\C\{x,y\}$. We notice that a \emph{general} $\C$-linear combination of the given generators of $\Delta(I)$ is a germ $f$ satisfying $I=T(f)$. By contrast, for \emph{special} linear combinations of the generators, the corresponding $T(f)$ is smaller than $I$.

We aim to coin a concept which encompasses the above example and systematizes the intuition, that of a general linear combination $f$ of the given generators of $\Delta(I)$ has the largest possible $T(f)$ and could reveal whether a given ideal $I$ is a Tjurina ideal. To this end, our results and definitions here are expressed geometrically, since it seemed to us more appropriate to explain these ideas. Therefore, we use standard concepts on schemes, consistent with \cite{H2}, Chapter II, to which we refer for unexplained terminology.

Let $J\subset \Oh_{\C^n,0}$ be any ideal. We regard $J$ as an ideal sheaf on $\mbox{Spec}\,\Oh_{\C^n,0}$. Suppose $J$ is given by generators: $J=\langle g_1,\ldots,g_q\rangle \subset \Oh_{\C^n,0}$; then consider the projective $(q-1)$-space over $\Oh_{\C^n,0}$, namely, $\pr^{q-1}=\mbox{Proj}(S)$, where $S=\Oh_{\C^n,0}[\alpha_1,\ldots,\alpha_{q}]=\bigoplus_{d\geqslant 0}\,S_d$ is the ring of polynomials over $\Oh_{\C^n,0}$, with variables $\alpha_i$, graded so that $\deg\alpha_i=1$, for all $i$.

Let $\pi:\pr^{q-1}\rightarrow \mbox{Spec}\,\Oh_{\C^n,0}$ be the natural morphism of $\mbox{Spec}\,\Oh_{\C^n,0}$-schemes and let $\sigma$ denote the global section $\sum_{i=1}^q\,g_i\alpha_i$ of $\pi^*(J)\otimes \Oh_{\pr^{q-1}}(1)$. Then there is a \emph{Tjurina ideal sheaf} of $\sigma$ on $\pr^{q-1}$, namely $\mathscr{T}(\sigma)$, the sheaf associated to the homogeneous ideal $\langle \sigma,\frac{\partial \sigma}{\partial x_1},\ldots,\frac{\partial \sigma}{\partial x_n}\rangle $ of $S$. Clearly $\mathscr{T}(\sigma)$ is a subsheaf of $\pi^*(T(J))$, so we can consider the quotient sheaf $$\mathscr{F}=\frac{\pi^*(T(J))}{\mathscr{T}(\sigma)}$$ on $\pr^{q-1}$. For later use, we recall that since $\mathscr{F}$ is coherent and $\pr^{q-1}$ is a noetherian scheme, the support $\mbox{Supp}\mathscr{F}$ of $\mathscr{F}$ is a closed subscheme of $\pr^{q-1}$ given by the vanishing of $\Big(\mathscr{T}(\sigma):\pi^*(T(J))\Big)$.

\begin{definition}\label{tdependence} We say that an ideal $J\subset\Oh_{\C^n,0}$ is \emph{$T$-dependent} if\linebreak $\pi^{-1}(\M)\not\subset\mbox{Supp}\mathscr{F}$. Equivalently, $J$ is $T$-dependent if $\Big(\mathscr{T}(\sigma):\pi^*(T(J))\Big)\not\subset \M S$.\end{definition}

We check that $T$-dependence for $J$ is a well-defined concept, being independent on the choice of generators for $J$. For, let the ideal $J$ be given by another system of generators, say $J=\langle h_1,\ldots,h_u \rangle $. Let $\pr^{u-1}=\mbox{Proj}(S')$, being $S'=\Oh_{\C^n,0}[\,\beta_1,\ldots,\beta_{u}]$. Carry the above construction with the obvious morphism $\pi':\pr^{u-1}\rightarrow \mbox{Spec}\,\Oh_{\C^n,0}$ and the section $\sigma'=\sum_{j=1}^u\,h_j\beta_j$ instead of $\pi$ and $\sigma$, obtaining a sheaf $\mathscr{F'}$ on $\pr^{u-1}$.

\begin{lemma}Notation being as above, $\pi^{-1}(\M)\not\subset\mbox{Supp}\mathscr{F}$ if and only if $\pi'^{-1}(\M)\not\subset\mbox{Supp}\mathscr{F'}$.\end{lemma}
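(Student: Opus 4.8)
The plan is to reduce the statement to a single elementary operation on generating sets and then to exploit the flatness of a linear projection of projective space. It suffices to treat two special cases: (a) a reordering of the generators, and (b) the passage from a generating set $g_1,\ldots,g_q$ of $J$ to the enlarged set $g_1,\ldots,g_q,g_{q+1}$, where $g_{q+1}\in J$ is an arbitrary (hence redundant) element. Indeed, given the two systems $G=(g_1,\ldots,g_q)$ and $H=(h_1,\ldots,h_u)$, repeated application of (b) shows that $G$ and the enlarged set $(g_1,\ldots,g_q,h_1,\ldots,h_u)$ give the same verdict (``$T$-dependent'' or not), and likewise $H$ and $(h_1,\ldots,h_u,g_1,\ldots,g_q)$; as these two enlarged sets differ only by a reordering, (a) finishes the reduction. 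Case (a) is immediate: a permutation of the $\alpha_i$ induces an automorphism of $\pr^{q-1}$ over $\Spec\Oh_{\C^n,0}$ fixing $\sigma$, carrying $\mathscr{T}(\sigma)$ and $\pi^*(T(J))$ to the corresponding objects for the reordered set, and preserving the fibre $\pi^{-1}(\M)$; hence it identifies the two supports.

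For case (b), write $g_{q+1}=\sum_{i=1}^q c_i g_i$ with $c_i\in\Oh_{\C^n,0}$, so that on $\pr^{q}=\mbox{Proj}\,\Oh_{\C^n,0}[\beta_1,\ldots,\beta_{q+1}]$ the section is $\sigma'=\sum_{i=1}^q g_i\beta_i+g_{q+1}\beta_{q+1}$. I would first apply the unipotent substitution $\beta_i\mapsto\beta_i-c_i\beta_{q+1}$ ($i\leq q$), $\beta_{q+1}\mapsto\beta_{q+1}$, which is an automorphism of the graded $\Oh_{\C^n,0}$-algebra (its matrix is triangular with unit diagonal) and hence an automorphism of $\pr^q$ over the base fixing the fibre over $\M$. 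Under it $\sigma'$ becomes $\sum_{i=1}^q g_i\beta_i$, so we may assume $g_{q+1}=0$. This reduces (b) to comparing $\sigma=\sum_{i=1}^q g_i\alpha_i$ on $\pr^{q-1}$ with $\sigma'=\sum_{i=1}^q g_i\beta_i$ on $\pr^{q}$.

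Now the graded inclusion $\Oh_{\C^n,0}[\alpha_1,\ldots,\alpha_q]\hookrightarrow\Oh_{\C^n,0}[\beta_1,\ldots,\beta_{q+1}]$, $\alpha_i\mapsto\beta_i$, defines the linear projection $p\colon V\to\pr^{q-1}$ from the center $P_\infty=[0:\cdots:0:1]$, where $V=\pr^{q}\smallsetminus\{P_\infty\}$. The key point is that $p$ is smooth with fibres $\A^1$ (it is the complement of a section in a $\pr^1$-bundle over $\pr^{q-1}$), hence flat, and that $\pi'|_V=\pi\circ p$. Since $p^*\alpha_i=\beta_i$ and $p^*\Oh_{\pr^{q-1}}(1)=\Oh_V(1)$, one gets $p^*\sigma=\sigma'$ and $p^*(\partial\sigma/\partial x_j)=\partial\sigma'/\partial x_j$ on $V$; together with $p^*\pi^*(T(J))=\pi'^*(T(J))|_V$ and the right-exactness of the flat functor $p^*$, this yields an identification $\mathscr{F}'|_V=p^*\mathscr{F}$ of the two quotient sheaves on $V$, and consequently $\mbox{Supp}\,\mathscr{F}'\cap V=p^{-1}(\mbox{Supp}\,\mathscr{F})$.

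Finally I would compare the closed fibres $\pi'^{-1}(\M)=\pr^q_{\C}$ and $\pi^{-1}(\M)=\pr^{q-1}_{\C}$, noting that $p$ maps $\pi'^{-1}(\M)\smallsetminus\{P_\infty\}$ onto $\pi^{-1}(\M)$. If $\pi^{-1}(\M)\not\subset\mbox{Supp}\,\mathscr{F}$, pick $p_0$ in the complement; any point of the punctured fibre $p^{-1}(p_0)$ then avoids $\mbox{Supp}\,\mathscr{F}'$, so $\pi'^{-1}(\M)\not\subset\mbox{Supp}\,\mathscr{F}'$. Conversely, if $\pi'^{-1}(\M)\not\subset\mbox{Supp}\,\mathscr{F}'$, the set $\pr^q_{\C}\smallsetminus\mbox{Supp}\,\mathscr{F}'$ is a nonempty open subset of the irreducible variety $\pr^q_{\C}$ (of dimension $q\geq 1$), hence dense, so it contains some point $p''\neq P_\infty$; then $p(p'')$ avoids $\mbox{Supp}\,\mathscr{F}$, giving $\pi^{-1}(\M)\not\subset\mbox{Supp}\,\mathscr{F}$. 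This density argument is precisely what lets us ignore the center $P_\infty$, which is the one point where the comparison $\mathscr{F}'=p^*\mathscr{F}$ is unavailable; keeping the behaviour at $P_\infty$ from corrupting the equivalence is, I expect, the main technical subtlety of the write-up.
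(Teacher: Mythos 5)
Your architecture is genuinely different from the paper's (the paper builds one rational map $\varphi$ from a matrix of transition coefficients and pulls $\mathscr{F}'$ back along it; you reduce to elementary moves and then compare $\pr^{q-1}$ with $\pr^{q}$ by a linear projection), and the projection half of your argument is sound: because $\alpha_i\mapsto\beta_i$ has \emph{constant} coefficients, $p^*$ commutes with $\partial/\partial x_j$, so writing $\sigma''=\sum_{i\leqslant q}g_i\beta_i$ and $\mathscr{F}''$ for its associated sheaf one really gets $\mathscr{F}''\vert_V=p^*\mathscr{F}$, and your density argument around the centre $P_\infty$ is correct (the same remark applies to move (a)). The gap is the step you dispatch in one sentence. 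The unipotent substitution $\Psi(\beta_i)=\beta_i-c_i\beta_{q+1}$ has coefficients $c_i\in\Oh_{\C^n,0}$ which are \emph{not} constants, so $\Psi$ does not commute with $\partial/\partial x_j$. It does carry $\sigma'$ to $\sigma''$, but a direct computation gives
\[
\Psi\Big(\frac{\partial\sigma'}{\partial x_j}\Big)=\frac{\partial\sigma''}{\partial x_j}+\beta_{q+1}\sum_{i=1}^{q}\frac{\partial c_i}{\partial x_j}\,g_i ,
\]
so $\Psi(\mathscr{T}(\sigma'))$ and $\mathscr{T}(\sigma'')$ differ by degree-one error terms with coefficients in $J$, and such terms need not lie in $\mathscr{T}(\sigma'')$. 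Concretely, take $J=\M^2=(x^2,xy,y^2)\subset\C\{x,y\}$ and the redundant generator $g_{4}=x^3=x\cdot x^2$, so $c=(x,0,0)$ and the error term is $x^2\beta_4$: at the point $[1:0:0:1]$ of the fibre over $\M$, trivializing on $D_+(\beta_1)$, one checks that $x^2(\beta_4/\beta_1)$ belongs to neither ideal; since the two ideals have generating sets differing exactly by this element and $\beta_4/\beta_1$ is a unit there, they are genuinely different ideal sheaves at that point. Hence ``we may assume $g_{q+1}=0$'' is unjustified: the automorphism identifies the sections but not the Tjurina ideal sheaves, hence not the sheaves $\mathscr{F}$, hence a priori not the supports.

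This is not a removable technicality, because it is where the entire content of the lemma sits. If the $c_i$ could be taken in $\C$ the substitution would be harmless, but then your moves (a) and (b) only connect generating sets related by $\C$-linear operations, which is insufficient: already $\{x^2\}$ and $\{(1+x)x^2\}$ generate the same ideal of $\C\{x\}$, yet every set reachable from $\{x^2\}$ by your constant-coefficient moves consists of elements of $\C x^2$. What actually has to be proved at the problematic step is that the two ideal sheaves, which differ by those $J$-valued error terms, still cut out supports meeting the fibre $\pi^{-1}(\M)$ the same way; unwinding this at closed points of the fibre (via the localization $\mathscr{T}(\sigma)_P=T(\sigma/\alpha_l)S_{(P)}$ and Nakayama, as in the proof of Theorem \ref{main}), it amounts to showing that if an $\Oh_{\C^n,0}$-linear combination $\sum_i\lambda_ig_i$, $\lambda_i\in\Oh_{\C^n,0}$, satisfies $T\big(\sum_i\lambda_ig_i\big)=T(J)$, then some $\C$-linear combination of the $g_i$ does too. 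Nothing in your proposal supplies such an argument. Be aware that the paper's own proof passes through the same needle's eye (its identity $\mathscr{T}(\sigma)=\varphi^*(\mathscr{T}(\sigma'))$ involves the equally non-constant coefficients $r_{ji}$), so your reduction has not found a route around the crux of the lemma; it has relocated it into a step presented as a change of variables.
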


\begin{proof} We will show that $\pi^{-1}(\M)\not\subset\mbox{Supp}\mathscr{F}$ implies $\pi'^{-1}(\M)\not\subset\mbox{Supp}\mathscr{F'}$. The other implication can be proved analogously. 

Since the $g_i$'s and the $h_j$'s generate the same ideal $J$ of $\Oh_{\C^n,0}$, we can write for all $i$, $g_i=\sum_jr_{ji}h_j$ with some $r_{ji}\in\Oh_{\C^n,0}$ and at least one $r_{ji}\not\in\M$. We construct an homomorphism $\Phi:S'\rightarrow S$ of graded $\Oh_{\C^n,0}$-algebras (preserving degrees) induced by $\beta_j\mapsto \sum_ir_{ji}\alpha_i$. This gives a natural morphism of $\Oh_{\C^n,0}$-schemes, $\varphi: U\rightarrow\pr^{u-1}$, where $U\subset\pr^{q-1}$ is the complement of the indeterminacy locus of $\varphi$, given by $V(\Phi(\beta_1),\ldots,\Phi(\beta_u))\subset\pr^{q-1}$. Observe that $\pi^{-1}(\M)$ is not contained in the indeterminacy locus of $\varphi$ because $\Phi(\beta_j)\not\in\M S$ for at least one $j$. Notice also that the construction of $\varphi$ implies both $\sigma\vert_{U}=\varphi^*(\sigma')$ and $\pi\vert_{U}=\pi'\circ \varphi$. In particular, $\mathscr{T}(\sigma)\vert_{U}=\varphi^*(\mathscr{T}(\sigma'))$ and $\pi^*(T(J))\vert_{U}=\varphi^*\pi'^*(T(J))$. According to the definition of $\mathscr{F}$ and $\mathscr{F'}$, we deduce $\mathscr{F}\vert_{U}=\varphi^{*}\mathscr{F}'$.

Assume $\pi^{-1}(\M)\not\subset\mbox{Supp}\mathscr{F}$. As observed above $\mbox{Supp}\mathscr{F}$ is closed in the irreducible $\pr^{q-1}$. It follows that there exists some $P\in\pi^{-1}(\M)\cap U$, $P\not\in \mbox{Supp}\mathscr{F}$. Since $U\cap \mbox{Supp}\mathscr{F}=\varphi^{-1}(\mbox{Supp}\mathscr{F}')$, we have $\varphi(P)\not\in\mbox{Supp}\mathscr{F'}$ and $\M S'\subset\Phi^{-1}(\M S)\subset\Phi^{-1}(P)=\varphi(P)$. Therefore $\varphi(P)\in \pi'^{-1}(\M)$, showing that $\pi'^{-1}(\M)\not\subset\mbox{Supp}\mathscr{F'}$.\end{proof}

Now that we have defined $T$-fullness and $T$-dependence, we present some examples showing that the two notions are independent of each other.

\begin{example} Let $J=\M^3=\langle x^3,x^2y,xy^2,y^3\rangle \subset \C\{x,y\}=\Oh_{\C^2,0}$. Clearly $J$ is $T$-full. Let us show that $J$ fails to be $T$-dependent.

Notations being as before, $\pi^*(T(J))$ is the sheaf in $\pr^3$ associated to $T(J)S=\langle x^2,xy,y^2\rangle S.$ \,\,\,Let $\sigma=x^3\alpha_1+x^2y\alpha_2+xy^2\alpha_3+y^3\alpha_4$. Then $$x^3\alpha_1+x^2y\alpha_2+xy^2\alpha_3+y^3\alpha_4,\,3x^2\alpha_1+2xy\alpha_2+y^2\alpha_3,\,x^2\alpha_2+2xy\alpha_3+3y^2\alpha_4$$ are generators of $\mathscr{T}(\sigma)$. In order to investigate the support of $\mathscr{F}$ in $\pr^{3}$ we list below the generators of $\Big(\mathscr{T}(\sigma):\pi^*(T(J))\Big)$, obtained with help of \emph{SINGULAR}, \cite{DGPS}. 
\begin{center}
 $3x^2\alpha_1+2xy\alpha_2+y^2\alpha_3$,\\ $x^2\alpha_2+2xy\alpha_3+3y^2\alpha_4$,\\ $2x\alpha_2^2-6x\alpha_1\alpha_3+y\alpha_2\alpha_3-9y\alpha_1\alpha_4$,\\ $x\alpha_2\alpha_3+2y\alpha_3^2-9x\alpha_1\alpha_4-6y\alpha_2\alpha_4$,\\ $2x\alpha_1\alpha_3^2+y\alpha_2\alpha_3^2-6x\alpha_1\alpha_2\alpha_4-4y\alpha_2^2\alpha_4+3y\alpha_1\alpha_3\alpha_4$,\\ $y\alpha_2^2\alpha_3^2-4y\alpha_1\alpha_3^3-6x\alpha_1\alpha_2^2\alpha_4-4y\alpha_2^3\alpha_4+18x\alpha_1^2\alpha_3\alpha_4+15y\alpha_1\alpha_2\alpha_3\alpha_4.$
 \end{center} Clearly $\pi^{-1}(\M)\subseteq\mbox{Supp}\mathscr{F}$. Hence $J$ is not $T$-dependent.\end{example}
 
\begin{example} Let $J=\langle x^4+y^3,xy\rangle \subset \C\{x,y\}=\Oh_{\C^2,0}$. We have seen before (cf. Example \ref{naotfull}) that $J$ fails to be $T$-full. Let us show that $J$ is $T$-dependent.

As above, $\pi^*(T(J))$ is the sheaf associated to $T(J)S=\M S.$ Let \linebreak $\sigma=(x^4+y^3)\alpha_1+xy\alpha_2$. Then $$(x^4+y^3)\alpha_1+xy\alpha_2,\,4x^3\alpha_1+y\alpha_2,\,3y^2\alpha_1+x\alpha_2$$ are generators of $\mathscr{T}(\sigma)$. We investigate the support of $\mathscr{F}$ in $\pr^{1}$. Below we present the generators of $\Big(\mathscr{T}(\sigma):\pi^*(T(J))\Big)$, again obtained with help of \emph{SINGULAR}, \cite{DGPS}. 

\begin{center}
$x\alpha_2+3y^2\alpha_1$,\\
$y\alpha_2+4x^3\alpha_1$,\\
$\alpha_2^2-12x^2y\alpha_1^2$,\\
$(3y^3-4x^4)\alpha_1$,\\
$x^4\alpha_1$,\\
$x^3y^2\alpha_1$,\\
$x^3y\alpha_1^2$,\\
$x^2y^2\alpha_1^2.$
\end{center} We see that the third generator in the above list does not belong to $\M S$. Hence in this case $\Big(\mathscr{T}(\sigma):\pi^*(T(J))\Big)\not\subset \M S$, showing that $J$ is $T$-dependent.\end{example}

\begin{example}\label{principal5} If $J\subseteq \Oh_{\C^n,0}$ is a principal ideal then $J$ is $T$-dependent. Indeed, unraveling the definition in case $J=\langle f\rangle $ we see that $\mbox{Supp}\mathscr{F}\subset\pr^0$ is empty, opposite to $\pi^{-1}(\M)$ which is not.\end{example}

\subsection{Main Result}

We are ready to state and prove our result characterizing Tjurina ideals.

\begin{theorem}\label{main} Let $I\subset\Oh_{\C^n,0}$ be an ideal. Then $I$ is a Tjurina ideal if and only if $I$ is $T$-full and $\Delta(I)$ is $T$-dependent.
\end{theorem}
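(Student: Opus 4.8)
The plan is to reduce the entire statement to a single correspondence between the scheme-theoretic condition of $T$-dependence and the existence of a \emph{constant-coefficient} combination of generators with maximal Tjurina ideal, and then to feed in $T$-fullness through the identity $T(\Delta(I))=I$. Write $J=\Delta(I)$, fix generators $J=(g_1,\ldots,g_q)$ and put $R:=\Oh_{\C^n,0}$, so that, in the notation of Definition \ref{tdependence}, $\sigma=\sum_i g_i\alpha_i$, $\mathscr{T}(\sigma)\subseteq\pi^*(T(J))$ and $\mathscr{F}=\pi^*(T(J))/\mathscr{T}(\sigma)$ on $\pr^{q-1}$. The key claim I would isolate is that for every closed point $P\in\pi^{-1}(\M)$, corresponding to $[\lambda_1:\cdots:\lambda_q]\in\pr^{q-1}(\C)$, and its associated germ $f_P:=\sum_i\lambda_i g_i$, one has $P\notin\mbox{Supp}\,\mathscr{F}$ if and only if $T(f_P)=T(J)$. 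Granting this, since $\mbox{Supp}\,\mathscr{F}$ is closed and the fibre $\pi^{-1}(\M)\cong\pr^{q-1}_{\C}$ is an integral scheme whose closed points are dense, the defining condition $\pi^{-1}(\M)\not\subset\mbox{Supp}\,\mathscr{F}$ of $T$-dependence becomes equivalent to the existence of a $\C$-linear combination $f$ of the $g_i$ with $T(f)=T(J)$.

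To prove the claim I would work on the chart $\alpha_1\neq 0$ (assuming $\lambda_1\neq 0$), trivializing $\Oh_{\pr}(1)$ by $\alpha_1$ and setting $t_i=\alpha_i/\alpha_1$, so that $\mathscr{T}(\sigma)$ is cut out by $G:=g_1+\sum_{i\geqslant 2}t_i g_i$ together with its partials $\partial G/\partial x_j$ inside $R[t_2,\ldots,t_q]$, while $\pi^*(T(J))=T(J)\cdot R[t]$. Because $R[t]$ is $R$-flat, tensoring the coherent stalk with the residue field $\kappa(P)=\C$ of $\Oh_{\pr^{q-1},P}$, i.e. specializing $t_i\mapsto\lambda_i/\lambda_1$ and $x\mapsto 0$, carries $G$ and $\partial G/\partial x_j$ to the classes of $f_P$ and $\partial f_P/\partial x_j$; a short computation then identifies the fibre as $\mathscr{F}_P\otimes\kappa(P)\cong T(J)/\bigl(T(f_P)+\M\,T(J)\bigr)$. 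By Nakayama's Lemma applied in $R$, this vanishes exactly when $f_P$ and its partials generate $T(J)$, that is (since $T(f_P)\subseteq T(J)$ always) exactly when $T(f_P)=T(J)$; and applied to the coherent module $\mathscr{F}_P$ over the local ring $\Oh_{\pr^{q-1},P}$ it gives $\mathscr{F}_P=0$ iff $\mathscr{F}_P\otimes\kappa(P)=0$. This establishes the claim.

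With the claim in hand both implications follow, using that $I$ is $T$-full precisely when $T(\Delta(I))=I$. For the ``if'' direction, suppose $I$ is $T$-full and $\Delta(I)$ is $T$-dependent; then the discussion above furnishes a $\C$-linear combination $f$ of the generators of $\Delta(I)$ with $T(f)=T(\Delta(I))=I$, so $I$ is a Tjurina ideal. For the ``only if'' direction, let $I=T(f)$; Proposition \ref{tfullehnecess} already gives that $I$ is $T$-full, whence $T(\Delta(I))=I$, and moreover $f\in\Delta(I)$ since $T(f)=I\subseteq I$. Here the crucial move is to enlarge the generating set of $\Delta(I)$ to $(g_1,\ldots,g_q,f)$, which does not change $T$-dependence by the independence-of-generators Lemma proved right after Definition \ref{tdependence}. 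The coordinate point $P_0=[0:\cdots:0:1]\in\pi^{-1}(\M)$ then has associated germ $f_{P_0}=f$ with $T(f)=I=T(\Delta(I))$, so by the claim $P_0\notin\mbox{Supp}\,\mathscr{F}$, giving $\pi^{-1}(\M)\not\subset\mbox{Supp}\,\mathscr{F}$, i.e. $\Delta(I)$ is $T$-dependent.

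The main obstacle is the claim itself, and within it the correct identification of the fibre $\mathscr{F}_P\otimes\kappa(P)$ with $T(J)/(T(f_P)+\M\,T(J))$: one must check that restricting $\sigma$ and the $\partial\sigma/\partial x_j$ to a closed point of the fibre specializes to \emph{exactly} $f_P$ and its partials, and then package the scheme-theoretic support condition into a statement about one closed point via density of closed points and Nakayama. The bookkeeping that only the derivatives $\partial/\partial x_j$ (never $\partial/\partial\alpha_i$) enter $\mathscr{T}(\sigma)$ is exactly what makes the constant-coefficient combinations $f_P$, rather than arbitrary $R$-linear combinations of the $g_i$, the right objects to track; this is also the reason that in the ``only if'' direction one should adjoin $f$ as a \emph{generator} and test the coordinate point $P_0$, instead of attempting to realize $f$ as an $R$-combination of the $g_i$, where the Leibniz rule would introduce the spurious terms $\sum_i g_i\,\partial r_i/\partial x_j$.
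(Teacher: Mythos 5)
Your proof is correct, and although it shares the paper's broad strategy---testing membership in $\mbox{Supp}\,\mathscr{F}$ at the closed points $p_{\lambda}S+\M S$ of the fibre $\pi^{-1}(\M)$ and invoking Nakayama---its technical core is genuinely different. The paper's engine is the Estimate (\ref{estimate}), which compares $\mathscr{T}(\sigma)_P$ with $T(\sum_k\lambda_kg_k)S_{(P)}$ as localized ideals; Nakayama over $S_{(P)}$ then yields, in the reverse direction, only the \emph{localized} equality $IS_{(P)}=T(\sum_k\lambda_kg_k)S_{(P)}$, and the paper must close with a separate descent step (the divisibility-and-degree argument in the UFD $S$) to reach the honest equality $I=T(\sum_k\lambda_kg_k)$ in $\Oh_{\C^n,0}$. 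Your fibre identification $\mathscr{F}_P\otimes\kappa(P)\cong T(J)/\bigl(T(f_P)+\M\,T(J)\bigr)$ plays the role of the Estimate (indeed both rest on the same fact, that $\sigma/\alpha_l$ and $f_P$ agree modulo $P\cdot\pi^{*}(T(J))$), but your second application of Nakayama---over $\Oh_{\C^n,0}$ itself rather than over $S_{(P)}$---delivers $T(f_P)=T(J)$ directly as an equality of ideals of $\Oh_{\C^n,0}$, so the UFD descent becomes unnecessary; this is a real simplification. Your treatment of the forward direction is also cleaner: where the paper tersely asserts that one ``can (and do) assume $(\lambda_1,\ldots,\lambda_q)\in\C^q\setminus\{0\}$'' (which implicitly requires modifying the generating set of $\Delta(I)$), you make the move explicit by adjoining $f$ as an extra generator, citing the independence-of-generators lemma, and testing the coordinate point $[0:\cdots:0:1]$. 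Two small points to tidy up. First, the specialization of $G$ at $P$ is $f_P/\lambda_1$ rather than $f_P$; this is harmless since a nonzero scalar multiple has the same Tjurina ideal and the same $\C$-span modulo $\M\,T(J)$, but it should be said. Second, the property of $\pi^{-1}(\M)\cong\pr^{q-1}_{\C}$ that you actually need is that its closed points are dense in every nonempty open subset (equivalently, that it is a Jacobson scheme, being of finite type over $\C$); integrality, which you also invoke, is not the operative hypothesis, though the density statement you rely on is correct.
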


\begin{proof} Let $\Delta(I)=\langle g_1,\ldots,g_q\rangle $ and let $\mathscr{F}$ denote the sheaf $\pi^*(T(\Delta(I)))/\mathscr{T}(\sigma)$ on $\pr^{q-1}$, as described before. For any $(\lambda_1,\ldots,\lambda_q)\in\C^q\setminus \{0\}$ we will denote by $p_{\lambda}\subseteq \C[\alpha_1,\ldots,\alpha_{q}]$ the homogeneous prime ideal generated by $\{\lambda_k\alpha_{\ell}-\lambda_{\ell}\alpha_k\}_{k,{\ell}}$. If $P\in D_+(\alpha_{\ell})\subset\pr^{q-1}$ one checks easily that \linebreak $\mathscr{T}(\sigma)_P=T\Big(\sigma/\alpha_{\ell}\Big)S_{(P)}$, where parenthetical notation $-\,_{(P)}$ (here and in what follows) indicates the submodule of degree zero elements of the localization of a graded module at $P\in\pr^{q-1}$. Moreover, if $p_{\lambda}S\subseteq P\in D_+(\alpha_{\ell})$ then $\lambda_{\ell}\in\C\setminus 0$ and we can use Remark \ref{easy1} to obtain the following Estimate on ideals in $S_{(P)}$:

\begin{multline}\label{estimate}
T\Big(\sum_k\Big(\lambda_k-\frac{\lambda_{\ell}\alpha_k}{\alpha_{\ell}}\Big)g_k\Big)S_{(P)}\subseteq\\ \subseteq\sum_k T\Big(\Big(\lambda_k-\frac{\lambda_{\ell}\alpha_k}{\alpha_{\ell}}\Big)g_k\Big)S_{(P)}\subseteq \sum_k \Big(\lambda_k-\frac{\lambda_{\ell}\alpha_k}{\alpha_{\ell}}\Big)T(g_k)S_{(P)}\subseteq IP_{(P)}.
\end{multline}

To prove the direct affirmation in the statement, we assume that $I$ is a Tjurina ideal. We have already seen that $I$ is $T$-full (cf. Proposition \ref{tfullehnecess}); in particular $\mathscr{F}=\pi^*(I)/\mathscr{T}(\sigma)$. Now we show that $\Delta(I)$ is $T$-dependent. Since this is clear if $I$ is principal, particularly if $I=\langle 0\rangle$ - cf. Examples \ref{principal2} and \ref{principal5} - we assume $I=T(f)$, non-principal, with $f=\sum_k r_kg_k$ for certain $r_k\in\Oh_{\C^n,0}$.  We write $r_k=\lambda_k+s_k$, for $\lambda_k\in\C$ and $s_k\in\M$, for all $k=1,\ldots,q$. Since $I\neq \langle 0 \rangle$ we can use Remark \ref{easy},\,(v) to check that $f\not\in\M\Delta(I)$. Hence, at least one of the $r_k$'s is an invertible element of $\Oh_{\C^n,0}$. Hence, using Remark \ref{easy1} again we can (and do) assume $(r_1,\ldots,r_q)=(\lambda_1,\ldots,\lambda_q)\in\C^q\setminus \{0\}$. \linebreak We consider $P=p_{\lambda}S+\M S\in\pr^{q-1}$. Then $P\in\pi^{-1}(\M)$, so that \linebreak $\pi^*(I)_{P}=I_{\pi(P)}\otimes S_{(P)}=I_{\M}\otimes S_{(P)}=IS_{(P)}$. On the other hand (say if $P\in D_+(\alpha_{\ell})$), our assumption $I=T(f)$ together with Estimate (\ref{estimate}) above produces
\begin{multline*}IS_{(P)}=T\Big(\sum_k\lambda_k g_k\Big) S_{(P)}\subseteq\\ \subseteq T\Big(\sum_k\Big(\lambda_k-\frac{\lambda_{\ell}\alpha_k}{\alpha_{\ell}}\Big)g_k\Big)S_{(P)}+T\Big(\lambda_{\ell}\sigma/\alpha_{\ell}\Big)S_{(P)}\subseteq IP_{(P)}+\mathscr{T}(\sigma)_P\subseteq IS_{(P)}.
\end{multline*}
It follows that $IP_{(P)}+\mathscr{T}(\sigma)_P=IS_{(P)}$. Then we can use Nakayama's Lemma to obtain $\pi^*(I)_{P}=IS_{(P)}=\mathscr{T}(\sigma)_P$. Hence $P\not\in \mbox{Supp}\mathscr{F}$, which completes the proof that $\Delta(I)$ is $T$-dependent.

To prove the reverse affirmation in the statement $I$ is assumed to be $T$-full and $\Delta(I)$ is assumed to be $T$-dependent. Then $I=T(\Delta(I))$ and there exists some $P\in\pi^{-1}(\M)\setminus \mbox{Supp}\mathscr{F}$. It follows that $$\pi^*(I)_{P}=\pi^*(T(\Delta(I))_{P}=\mathscr{T}(\sigma)_P.$$ Notice that out of the decomposition $S=\C[\alpha_1,\ldots,\alpha_q]+\M S$ we obtain $P=P\cap\C[\alpha_1,\ldots,\alpha_q]+P\cap\M S=P\cap\C[\alpha_1,\ldots,\alpha_q]+\M S$. Therefore, there exists some $(\lambda_1,\ldots,\lambda_q)\in\C^q\setminus \{0\}$, $p_{\lambda}\supseteq P\cap\C[\alpha_1,\ldots,\alpha_q]$, such that the homogeneous prime ideal $p_{\lambda}S+\M S$ also belongs to $\pi^{-1}(\M)\setminus \mbox{Supp}\mathscr{F}$. 

With this simplification - assuming $P=p_{\lambda}S+\M S\in D_+(\alpha_{\ell})$, as before - the displayed equality of stalks above reads $IS_{(P)}=T\Big(\lambda_{\ell}\sigma/\alpha_{\ell}\Big)S_{(P)}.$ The task is to prove $I=T(f)$, for $f=\sum_k\lambda_kg_k$. Estimate (\ref{estimate}) here gives 

\begin{multline*}
IS_{(P)}\subseteq T\Big(\sum_k\Big(\lambda_k-\frac{\lambda_{\ell}\alpha_k}{\alpha_{\ell}}\Big)g_k\Big)S_{(P)}+T\Big(\sum_k\lambda_k g_k\Big)S_{(P)}\subseteq \\\subseteq IP_{(P)}+T\Big(\sum_k\lambda_k g_k\Big)S_{(P)}\subseteq IS_{(P)}.
\end{multline*}
We obtain $IP_{(P)}+T(\sum_k\lambda_kg_k)S_{(P)}=IS_{(P)}$ and by Nakayama's Lemma again we deduce $T(\sum_k\lambda_kg_k)S_{(P)}=IS_{(P)}$. 

It remains to verify that $I\subseteq T(\sum_k\lambda_kg_k)$. Clearly it suffices to check that if $h\in I$ and there are some $u\in T(\sum_k\lambda_kg_k)$ and $G_1,G_2\in S$, homogeneous of same degree with $G_2\not\in P$ such that $G_2h=G_1u$, then $h\in T(\sum_k\lambda_kg_k)$. With this reduction, a simple argument involving divisibility and degree in the UFD $S$ then shows $zh=uH$, for some homogeneous element $H\in S$ and some degree zero factor $z$ of $G_2$. In particular, $z\in\Oh_{\C^n,0}\setminus \M$. Again by degree reasons, $H$ has degree zero, i.e., it belongs to $\Oh_{\C^n,0}$. This shows that $h\in T(\sum_k\lambda_kg_k)$. Hence $I\subseteq T(\sum_k\lambda_kg_k)\subseteq I$ and we conclude that $I$ is a Tjurina ideal of $\Oh_{\C^n,0}$.\end{proof}

Notation as in the proof of Theorem \ref{main} we have the Corollary below, the usefulness of which resides in the fact it says that computing the Tjurina ideal of a sufficiently general $\C$-linear combination of generators of $\Delta(I)$ reveals whether $I\subset \Oh_{\C^n,0}$ is a Tjurina ideal. As a consequence it gives an explicit solution to both the Recognition and the Reconstruction problems mentioned in the Introduction.

\begin{corollary}
Let $I\subset \Oh_{\C^n,0}$ be an ideal and let $\Delta(I)=\langle g_1,\ldots,g_q\rangle $. Then $I$ is a Tjurina ideal if and only if $I=T(\sum_k\lambda_kg_k)$ for $p_{\lambda}$ varying in a (non empty) Zariski open subset of $\pr_{\C}^{q-1}$.
\end{corollary}

\begin{proof} The canonical inclusion $\C\rightarrow \Oh_{\C^n,0}$ induces $\C[\alpha_1,\ldots,\alpha_{q}]\rightarrow S$, a graded inclusion of $\C$-algebras; hence a morphism $\rho:\pr^{q-1}\rightarrow \pr_{\C}^{q-1}$, which is proper and dominant. For the non-trivial part of the statement, assume that $I$ is a Tjurina ideal. From the Theorem, we know that $\Delta(I)$ is $T$-dependent, so that $\pi^{-1}(\M)\setminus \mbox{Supp}\mathscr{F}$ is a non empty Zariski open subset of $\pi^{-1}(\M)$. It follows that its image under $\rho$ is a non-empty open Zariski subset $U\subset\pr_{\C}^{q-1}$. Homogeneous prime ideals $p_{\lambda}$ in $U$ correspond bijectively, via $\rho$, to homogeneous prime ideals of type $p_{\lambda}S+\M S\in\pi^{-1}(\M)\setminus \mbox{Supp}\mathscr{F}$. Since the Theorem also guarantees that $I$ is $T$-full, we can proceed as in its proof to show that $I=T(\sum_k\lambda_kg_k)$, for all $p_{\lambda}\in U$.\end{proof}




\section{Remarks and Examples}\label{terceira}

Now that we achieved a complete characterization of Tjurina ideals by means of Theorem \ref{main}, we include some observations regarding intersections of Tjurina ideals. We focus on monomial ideals, since it already contains the kind of phenomenon we aim to illustrate.

\begin{example} \label{monomial1} \emph{Irreducible} monomial ideals are necessarily Tjurina ideals. Recall that irreducible monomial ideals are of the form $\langle x_{i_1}^{e_1},\ldots,x_{i_s}^{e_s}\rangle $, for $\{i_1<\ldots <i_s\}\subseteq\{1,\ldots,n\}$ and all $e_{j}>0$. Hence $$\langle x_{i_1}^{e_1},\ldots,x_{i_s}^{e_s}\rangle =T(x_{i_1}^{e_1+1}+\ldots+x_{i_s}^{e_s+1}).$$\end{example}

The following well-known result (see \cite{HH}, Theorem 1.3.1, Corollary 1.3.2, for instance) motivates us to ask questions about monomial ideals which are also Tjurina ideals.

\begin{proposition}\label{monomial0} Any monomial ideal $I\subset \Oh_{\C^n,0}$ can be written uniquely as a finite and irredundant intersection $I=\cap_{k=1}^m\,Q_k$ of \emph{irreducible} monomial ideals.\end{proposition}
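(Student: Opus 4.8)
The plan is to recast the statement as a purely combinatorial fact about exponent sets and prove it there. First I would note that monomial ideals in $\Oh_{\C^n,0}=\R$ behave exactly as in a polynomial ring: such an ideal is finitely generated (the ring is noetherian), a monomial $x^a$ lies in $I$ if and only if its exponent $a$ lies in the up-set $E(I)\subseteq\mathbb{Z}_{\geq 0}^n$ generated by the exponents of the minimal monomial generators, and a finite intersection of monomial ideals is again monomial, with $E(I\cap J)=E(I)\cap E(J)$; none of this uses convergence. Passing to the complementary down-set $\Gamma(I)=\mathbb{Z}_{\geq 0}^n\setminus E(I)$ of standard exponents, intersection turns into union, $\Gamma(\bigcap_k Q_k)=\bigcup_k\Gamma(Q_k)$. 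An irreducible monomial ideal $Q=(x_{i_1}^{e_1},\dots,x_{i_s}^{e_s})$ corresponds to the \emph{box} $\Gamma(Q)=\{a:a_{i_j}<e_j,\ j=1,\dots,s\}$, bounded in the coordinates occurring in $Q$ and free in the others, and conversely every box arises this way. Assuming (as we may) that $I$ is proper and non-zero, the proposition becomes: the down-set $\Gamma(I)$ is a finite union of boxes in one and only one irredundant way.

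Call a box $B\subseteq\Gamma(I)$ \emph{maximal} if no strictly larger box lies in $\Gamma(I)$. For existence I would establish two points. Every $a\in\Gamma(I)$ lies in a box of $\Gamma(I)$, since the down-set property gives $\{b:b_j\leq a_j\ \forall j\}\subseteq\Gamma(I)$, and a greedy enlargement places this box inside a maximal one; hence $\Gamma(I)$ is the union of its maximal boxes. There are only finitely many maximal boxes: a box $\prod_j[0,c_j)$ lies in $\Gamma(I)$ iff for every generator exponent $u$ some coordinate $j$ satisfies $c_j\leq u_j$, and if a finite $c_j$ exceeds $\max_u u_j$ then that coordinate blocks no generator and can be pushed to $\infty$; thus every finite coordinate of a maximal box lies in $\{1,\dots,\max_u u_j\}$, leaving finitely many possibilities. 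Combining the two points writes $\Gamma(I)$ as a finite union of boxes, and discarding redundant terms yields a finite irredundant decomposition. (Alternatively, existence follows from the splitting identity $I=(I,v)\cap(I,w)$, valid whenever a minimal generator factors as $vw$ with $v,w$ coprime and non-trivial, together with noetherian induction.)

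The crux is uniqueness, for which I would show that the boxes of \emph{any} finite irredundant decomposition are exactly the maximal boxes. Let $\Gamma(I)=\bigcup_l B_l$ be a finite decomposition into boxes $B_l=\prod_j[0,c^{(l)}_j)$, and let $B=\prod_{j\in S}[0,c_j)\times\prod_{j\notin S}[0,\infty)$ be a maximal box, with $S$ its set of bounded directions. Choose $\gamma$ with $\gamma_j=c_j-1$ for $j\in S$ and $\gamma_j=N$ for $j\notin S$, where $N$ exceeds every finite $c^{(l)}_j$. Then $\gamma\in B\subseteq\Gamma(I)$, so $\gamma\in B_l$ for some $l$; for $j\notin S$ the membership $c^{(l)}_j>\gamma_j=N$ and the choice of $N$ force $c^{(l)}_j=\infty$, while for $j\in S$ we get $c^{(l)}_j>\gamma_j=c_j-1$, i.e. $c^{(l)}_j\geq c_j$. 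Hence $B\subseteq B_l\subseteq\Gamma(I)$, and maximality gives $B=B_l$. Thus every maximal box occurs in every box-decomposition; a non-maximal box in a decomposition lies in some maximal box already present and is therefore redundant. Consequently the irredundant decomposition is unique and equals the set of maximal boxes, which translates back into the asserted unique irredundant intersection $I=\bigcap_k Q_k$.

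I expect the genuine difficulty to be concentrated in the unbounded directions of the uniqueness step: one must confirm that only finitely many maximal boxes occur and then calibrate the corner point $\gamma$ against the given decomposition (through the constant $N$) so that the box covering $\gamma$ is forced to reproduce both the finite bounds and the unboundedness of $B$. In the $\M$-primary case $\Gamma(I)$ is finite, all directions are bounded, and the argument is transparent; the general case is exactly the truncation-at-$\max_u u_j$ bookkeeping indicated above. Everything else — the dictionary between monomial ideals and down-sets, and the existence of a finite decomposition — is routine once the box/complement correspondence is in place.
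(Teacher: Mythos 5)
Your argument is correct, but it follows a genuinely different route from the paper, which in fact gives no proof at all: Proposition \ref{monomial0} is quoted there as a well-known result with a pointer to Theorem 1.3.1 and Corollary 1.3.2 of \cite{HH}. The proof in that reference gets existence from the splitting identity $I=(I,v)\cap(I,w)$, valid when a minimal generator factors as $vw$ with $v,w$ coprime and non-trivial, together with noetherian induction --- precisely the alternative you mention in parentheses --- and gets uniqueness from the fact that an irreducible monomial ideal containing a finite intersection of monomial ideals must contain one of the factors. You instead dualize to the staircase picture: monomial ideals become down-sets of exponents, irreducible monomial ideals become boxes, intersections become unions, and the unique irredundant decomposition is identified with the set of maximal boxes. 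The three pillars of your argument all check out: the blocking criterion for when a box lies in $\Gamma(I)$; the truncation bound forcing every finite coordinate of a maximal box into $\{1,\ldots,\max_u u_j\}$, which simultaneously bounds the number of maximal boxes and makes the greedy enlargement terminate; and the corner-point argument with the calibration constant $N$, which forces every maximal box to occur in every finite box decomposition and hence shows the irredundant decomposition is exactly the set of maximal boxes. What your route buys: it is self-contained; it describes the irredundant components concretely (the maximal boxes, i.e.\ the corners of the staircase); and it works verbatim in $\Oh_{\C^n,0}=\C\{x_1,\ldots,x_n\}$, whereas \cite{HH} is stated for polynomial rings, so the paper's citation implicitly requires the routine transfer that your opening paragraph makes explicit (monomial ideals of the convergent power series ring are term-closed and determined by their exponent sets). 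What the citation and its lattice-theoretic proof buy is brevity, and an irreducibility argument that is not special to the combinatorics of staircases.
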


Of course we regard the irreducible monomial ideals as the building blocks of more general monomial ideals. Unfortunately, the result of the previous Example does not extend to arbitrary monomial ideals: \linebreak $\langle x^2,xy,y^2\rangle =\langle x^2,y\rangle \cap \langle x,y^2\rangle \subset \Oh_{\C^2,0}=\C\{x,y\}$ is not a Tjurina ideal (cf. Example \ref{ex1}); even for principal monomial ideals such a result does not hold (cf. Proposition \ref{principal4}).

This is \emph{not} to say that reducible monomial ideals are never Tjurina ideals. For instance, $I=\langle yz,xz,xy\rangle =\langle x,y\rangle \cap\langle x,z\rangle \cap \langle y,z \rangle \subset\Oh_{\C^3,0}=\C\{x,y,z\}$ of Example \ref{nretas} is a Tjurina ideal. Indeed, quite recently Epure and Schulze characterized in their main result of \cite{ES}, functions with monomial Tjurina ideals as those which are right equivalent to Thom-Sebastiani polynomials, solving \emph{Problem} $2^{*}$ in \cite{HaS}, which asks for such a characterization.

In our computations of Example \ref{proaluffi}, the next example will be useful.

\begin{example}\label{monomial2} Let $I=\langle x_{i_1}^{e_1},\ldots,x_{i_s}^{e_s}\rangle $ be an irreducible monomial ideal as in Example \ref{monomial1}. From the  results in Section \ref{primeira}, discussion before Proposition \ref{compdelta} we deduce that $a_1x_{i_1}^{e_1}+\ldots+a_sx_{i_s}^{e_s}\in\Delta(I)$ if and only if $a_kx_{i_k}^{e_k-1}\in I$, for all $k=1,\ldots,s$. Here we used that $I$ is monomial. This, in turn, happens if and only if $a_k\in (I:x_{i_k}^{e_k-1})=\langle x_{i_1}^{e_1},\ldots,x_{i_k},\ldots,x_{i_s}^{e_s}\rangle $. As a result we have $\Delta(I)=\langle x_{i_1}^{e_1+1},\ldots,x_{i_s}^{e_s+1}\rangle +I^2$, a monomial ideal.\end{example}

\begin{remark} The previous Example together with Proposition \ref{monomial0} and Remark \ref{easy}, (iii) show that the ideal of antiderivatives of a monomial ideal is also monomial.\end{remark}

It seems worth examining the Example below, in which a monomial and reducible Tjurina ideal is presented, together with some considerations regarding the dynamics of formation of Tjurina ideals. The Example also provides an example of a Tjurina ideal with an embedded component of codimension $2$, so answering negatively a Question left by Aluffi (\cite{A}, Example 3.1).

\begin{example}\label{proaluffi} Let $f=xw^2-yz^2\in\Oh_{\C^4,0}=\C\{x,y,z,w\}$. It has a monomial Tjurina ideal given by $T(f)=\langle yz,z^2,xw,w^2\rangle =I_1\cap I_2\cap I_3\cap I_4$, where $I_1=\langle z,w\rangle $, $I_2=\langle y,z^2,w\rangle $, $I_3=\langle x,z,w^2\rangle $ and $I_4=\langle x,y,z^2,w^2\rangle $. The ideal $T(f)$ has an embedded component of codimension $2$ at $\langle x,y,z^2,w^2\rangle $.

We notice that $I_1\cap I_2\cap I_3\cap I_4$ is a Tjurina ideal but that some ``partial intersections'' are not. For instance $I_1\cap I_4$ or $I_2\cap I_3$ are not Tjurina ideals as one can easily verify computing numbers of generators: both are minimally $6$-generated.

Harder is to check that $I_1\cap I_2\cap I_3=\langle yz,z^2,xw,zw,w^2 \rangle $ is not a Tjurina ideal. We begin computing its ideal of antiderivatives \linebreak $\Delta(I_1\cap I_2\cap I_3)=\Delta(I_1)\cap\Delta(I_2)\cap\Delta(I_3)$. Using the result of the previous Example we get $\Delta(I_1)=\langle z^2,w^2,zw\rangle ,\,\,\,$ $\Delta(I_2)=\langle y^2,z^3,w^2,yz^2,yw,z^2w\rangle ,\,\,\,$ and \linebreak $\Delta(I_3)=\langle x^2,z^2,w^3,xz,xw^2,zw^2\rangle .$ Hence $$\Delta(I_1\cap I_2\cap I_3)=\langle yz^2,z^3,z^2w,xw^2,zw^2,w^3,xyzw\rangle .$$ It is clear now that $I_1\cap I_2\cap I_3$ is $T$-full but we can check with SINGULAR \cite{DGPS} that $\Delta(I_1\cap I_2\cap I_3)$ is not $T$-dependent. Then Theorem $\ref{main}$, guarantees that $I_1\cap I_2\cap I_3$ is not a Tjurina ideal.
\end{example}

\noindent \textbf{Acknowledgements:} The author is grateful to Marcelo Escudeiro Hernandes, whose interest led to the investigation in the present work.




\end{document}